\def\@abssec#1{\vspace{.05in}\footnotesize \parindent .2in 
{\bf #1. }\ignorespaces} 
\newtheorem{theorem}{Theorem}[section]
\newtheorem{lemma}[theorem]{Lemma}
\newtheorem{proposition}[theorem]{Proposition}
\def \Rm {\mathbb R}
\def \Nm {\mathbb N}
\def \Cm {\mathbb C}
\def \Sm {\mathbb S}
\newcommand{\eps}{\varepsilon}
\newcommand{\E}{\mathbb E}
\newcommand{\ds}{\displaystyle}
\newcommand{\calQ}{\mathcal Q}
\newcommand{\calC}{\mathcal C}
\newcommand{\calL}{\mathcal L}
\newcommand{\calF}{\mathcal F}
\newcommand{\calO}{\mathcal O}
\newcommand{\calR}{\mathcal R}
\newcommand{\calD}{\mathcal D}
\def\fref#1{{\rm (\ref{#1})}}
\newcommand{\cout}[1]{}
\newcommand{\be}{\begin{equation}}
\newcommand{\ee}{\end{equation}}
\newcommand{\bea}{\begin{eqnarray}}
\newcommand{\eea}{\end{eqnarray}}
\newcommand{\bee}{\begin{eqnarray*}}
\newcommand{\eee}{\end{eqnarray*}}
\newcommand{\bal}{\begin{align*}}
\newcommand{\eal}{\end{align*}}
\begin{document}
{\title{Radiative transfer with long-range interactions: regularity and asymptotics}}

 \author[,1]{Christophe Gomez, \footnote{ christophe.gomez@univ-amu.fr}}
\affil[1]{Aix Marseille Universit{\'e}, CNRS, Centrale Marseille, I2M, UMR 7373, 13453 Marseille, France}
 \author[,2]{Olivier Pinaud \footnote{pinaud@math.colostate.edu}}
 \affil[2]{Department of Mathematics, Colorado State University, Fort Collins CO, 80523}
 \author[,3]{Lenya Ryzhik \footnote{ryzhik@math.stanford.edu}}
 \affil[3]{Department of Mathematics, Stanford University, Stanford CA 94305 }

\maketitle

\begin{abstract}
 This work is devoted to radiative transfer equations with long-range interactions. Such equations arise in the modeling of high frequency wave propagation in random media with long-range dependence. In the regime we consider, the singular collision operator modeling the interaction between the wave and the medium is conservative, and as a consequence wavenumbers take values on the unit sphere. Our goals are to investigate the regularizing effects of grazing collisions, the diffusion limit, and the peaked forward limit. As in the case where wavenumbers take values in $\Rm^{d+1}$, we show that the transport operator is hypoelliptic, so that the solutions are infinitely differerentiable in all variables. Using probabilistic techniques, we show as well that the diffusion limit can be carried on as in the case of a regular collision operator, and as a consequence that the diffusion coefficient is non-zero and finite. We finally consider the regime where grazing collisions are dominant.
\end{abstract}

\section{Introduction}
This work is the sequel to our first paper \cite{hypo} on radiative transfer equations with long-range interactions. More precisely, we are interested in transport equations of the form
\be \label{transportL}
\left\{
\begin{array}{l}
\ds \partial_t f+  k \cdot \nabla_x f=\calL f, \qquad \textrm{for } \qquad (t,x, k) \in \Rm_+ \times \Rm^{d+1} \times \Sm^d\\
\ds f(t=0,\cdot,\cdot)=f^0 \in L^2(\Rm^{d+1} \times \Sm^d)\\
\ds \calL f( k)= \int_{\Sm^d} F( k \cdot  p) \left(f( p)-f( k)\right)d\sigma(p).
\end{array} \right.
\ee
Above, $d \geq 1$, $\Sm^d$ is the unit sphere in $\Rm^{d+1}$ and $d\sigma(p)$ is the surface measure on $\Sm^d$. Here, the collision kernel $F$ is non-negative and non-integrable at zero so that
$$\int_{\Sm^d} F( k \cdot  p) d\sigma( p)=\infty.$$
Our motivation for studying such equations stems from the analysis of wave propagation in random media with long-range dependence. For high frequency waves in the weak coupling regime, the wave energy is asymptotically described by a transport equation of the form \fref{transportL} with a singular kernel. The fact that the collision process is confined to the sphere is due to the conservative nature of the interaction between the wave and the random medium. Other types of collision kernel arise depending on the scalings. The one we consider has the form 
$$
\calL f(k)=\int_{\Rm^{d+1}} \delta \left( \frac{|k|^2}{2}- \frac{|p|^2}{2}
\right) 
\hat R(k-p) (f(p)-f(k)) dp,
$$
where $\delta$ is the Dirac measure, that decouples the transport equations for different values of $|k|$, and we may set $|k|=1$. The term $\hat R$ is the power spectrum of a random potential $V$, mean-zero and statistically homogeneous in space,
\[
R(x)=\E [V(x+y)V(y)]=
\frac{1}{(2 \pi)^d}\int_{\Rm^{d+1}} e^{i k \cdot x} \hat R(k) dk, 
\]
 that models random fluctuations in the Schr\"odinger equation $$
i \partial_t \psi^\eps+ \frac{1 }{2}\Delta_x \psi^\eps - 
\sqrt{\eps} \,V\left({x} \right) \psi^\eps=0. 
$$
Here, ${\eps\ll 1}$ is the variance of the fluctuations. Equation \fref{transportL} then describes energy transport in a certain macroscopic limit via asymptotics of the Wigner transform \cite{GMMP,LP,wigner}. See \cite{balreview, Erdos-Yau,LUK, Spohn, gomez1} for more details on the link between \fref{transportL} and wave propagation in random media. 

In the present work, we investigate the regularity of the solutions of \fref{transportL} and two asymptotic regimes. We show that the transport operator is hypoelliptic, namely that for any initial condition in $L^2(\Rm^{d+1} \times \Sm^d)$, the solution is $\calC^\infty$ in all variables for positive times. This is a consequence of the singular nature of the collision operator when the correlation function $R(x)$ decays only
algebraically: 
\[
\textrm{when } \quad R(x)\sim\frac{1}{|x|^{2-2\alpha}},~~|x|\gg 1, \qquad \textrm{ then } \quad
\hat R(p)\sim\frac{1}{|p|^{2\alpha+d-1}},
\]
and this singularity is non-integrable when $\alpha\in(1/2,1)$ (recall
that the integration is carried over the
$d$-dimensional sphere). Therefore, we have (as $|k|=|p|=1$)
\[
\hat R (k-p)=\hat R (|k-p|)=\hat R (2
 \sqrt{1-\cos \theta}) \sim \theta^{-d-2\beta},\hbox{ as $\theta \to 0$,}
\]
where $\beta=2\alpha-1 \in (0,1)$. The regularizing effect of grazing collisions is now well-established in the case momenta take values in $\Rm^{d+1}$ and not just on the sphere, see e.g. \cite{AlexVillani,strain,strain2,alexandre_global,alexandre_hypo,alexandre_uncertainty,bouchut,lerner} for references on the Boltzmann equation in the so-called non cut-off case. The heuristics of the regularization effect is similar to the case of collisions over $\Rm^{d+1}$, and goes as follows: the high frequency behavior of the collision operator $\calL$ is essentially that of a fractional Laplace-Beltrami operator on the sphere, and as a consequence standard energy estimates yield some Sobolev regularity in the $k$ variable. This regularity is then propagated to the other variables by the free transport operator. The proof of regularity is based on the following hypoelliptic estimates that we derived in \cite{hypo}:
\begin{theorem} \label{hypo} Assume $g,h \in L^2(\Rm_t \times \Rm^{d+1}_x
  \times \Sm^d)$, and let $f \in L^2(\Rm_t \times \Rm^{d+1}_x \times
  \Sm^d)$ satisfy the transport equation 
\be \label{transporthypo}
\partial_t f+ k \cdot \nabla_x f= (-\Delta_d )^\beta h+g
\ee 
in the distribution sense, where $\Delta_d$ is the Laplace-Beltrami operator on $\Sm^d$ and $\beta \geq 0$. For some
  $\theta>0$, suppose, in addition, that 
\[
(-\Delta_d
  )^\frac{\theta}{2} f \in L^2(\Rm_t \times \Rm^{d+1}_x \times
  \Sm^d).
\] 
Then, for 
\[
\gamma=\frac{\theta}{2(1+2\beta)+\theta},
\]
we have
  $ \partial^\gamma_{t,x} f \in L^2(\Rm_t \times \Rm^{d+1}_x \times
  \Sm^d)$ with the estimate
$$
\| \partial_{t,x}^\gamma f \|_{L^2} \leq C \left( \|(-\Delta_d )^\frac{\theta}{2} f \|_{L^2}+\|f \|_{L^2}+\|g \|_{L^2}+\|h \|_{L^2} \right).
$$
\end{theorem}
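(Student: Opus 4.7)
The plan is to pass to Fourier variables in $(t,x)$ and carry out a microlocal decomposition on $\Sm^d$ at each frequency $(\tau,\xi)$. Taking the $(t,x)$-Fourier transform of \fref{transporthypo} with dual variables $(\tau,\xi)$ turns the transport equation into the pointwise algebraic identity
\[
iP(k)\,\hat f(\tau,\xi,k) = (-\Delta_d)^\beta \hat h(\tau,\xi,k) + \hat g(\tau,\xi,k), \qquad P(k) := \tau + k\cdot\xi,
\]
on $\Sm^d$. Setting $\langle\tau,\xi\rangle := (1 + \tau^2 + |\xi|^2)^{1/2}$, the desired estimate reduces by Plancherel to a weighted $L^2$ bound $\int\langle\tau,\xi\rangle^{2\gamma}|\hat f|^2 \,d\sigma(k)\,d\tau\,d\xi \leq C(\cdots)^2$.

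For each $(\tau,\xi)$, I would fix a smooth cutoff $\chi\in \calC^\infty_c(\Rm)$ with $\chi\equiv 1$ on $[-1,1]$ and $\mathrm{supp}\,\chi\subset [-2,2]$, and split $\hat f = \chi_\delta \hat f + (1-\chi_\delta)\hat f$ with $\chi_\delta(k) = \chi(P(k)/\delta)$ and threshold $\delta = \delta(\tau,\xi)$ to be optimized. On the non-resonant piece $(1-\chi_\delta)\hat f$, inverting $iP$ in the equation and using $|1/P|\leq \delta^{-1}$ on the support immediately controls the $\hat g$ contribution by $\delta^{-1}\|\hat g\|_{L^2(\Sm^d)}$. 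For the singular source $(-\Delta_d)^\beta \hat h$, since $(-\Delta_d)^\beta \hat h$ is only a distribution on $\Sm^d$ when $\hat h \in L^2$, I would argue by duality: compute the $L^2$ norm squared as a sesquilinear pairing and use self-adjointness of $(-\Delta_d)^\beta$ on $\Sm^d$ to move the fractional Laplace--Beltrami operator onto the factor $(1-\chi_\delta)\bar{\hat f}/P$, then control $(-\Delta_d)^\beta$ of this factor through a Leibniz/commutator estimate; this produces bounds of the form $\delta^{-1-2\beta}\|\hat h\|_{L^2(\Sm^d)}$, with lower-order terms involving the Sobolev norm $\|(-\Delta_d)^{\theta/2}\hat f\|_{L^2(\Sm^d)}$ that can be absorbed at the end.

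For the resonant piece $\chi_\delta\hat f$, supported in a spherical band of width $\sim\delta/|\xi|$ in the direction of $\xi$, I would exploit the hypothesis $(-\Delta_d)^{\theta/2}\hat f\in L^2(\Sm^d)$ via a one-dimensional fractional Sobolev inequality in the transversal coordinate $k\cdot\xi/|\xi|$, yielding a gain
\[
\|\chi_\delta \hat f\|_{L^2(\Sm^d)}^2 \leq C\,(\delta/|\xi|)^{2\theta}\,\|(-\Delta_d)^{\theta/2}\hat f\|_{L^2(\Sm^d)}^2.
\]
Summing the two contributions, multiplying by $\langle\tau,\xi\rangle^{2\gamma}$, and choosing $\delta$ as the power of $\langle\tau,\xi\rangle$ that equalizes the two bounds produces precisely $\gamma = \theta/(2(1+2\beta)+\theta)$; integrating in $(\tau,\xi)$ and invoking Plancherel closes the estimate, with the $\|f\|_{L^2}$ norm on the right-hand side absorbing small-frequency and zero-mode issues. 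The principal technical obstacle is the singular source $(-\Delta_d)^\beta \hat h$: its distributional nature on $\Sm^d$ rules out any pointwise estimate in the non-resonant zone and forces the duality argument above, with careful bookkeeping of the commutators between $(-\Delta_d)^\beta$ and the multiplication operator $(1-\chi_\delta)/P$ on the sphere.
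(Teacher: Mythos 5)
Your skeleton --- Fourier transform in $(t,x)$, then a resonant/non-resonant splitting on $\Sm^d$ according to the size of $P(k)=\tau+k\cdot\xi$ with an optimized threshold $\delta(\tau,\xi)$ --- is the right family of argument: the paper does not reprove Theorem \ref{hypo} but cites \cite{hypo}, whose proof follows Bouchut's technique adapted to the spherical geometry, and your plan is in that spirit. However, two of the quantitative inputs you treat as routine are wrong as stated, and they are exactly where the spherical geometry bites. First, in the non-resonant zone your duality/commutator bound $\delta^{-1-2\beta}\|\hat h\|_{L^2(\Sm^d)}$ omits the frequency factor: on $\{|P|\geq\delta\}$ each $k$-derivative falling on $(1-\chi_\delta)/P$ costs $|\xi|/\delta$ (since $|\partial_k^j(1/P)|\lesssim|\xi|^j/\delta^{j+1}$ there), so an operator of order $2\beta$ costs $(|\xi|/\delta)^{2\beta}\delta^{-1}$, not $\delta^{-1-2\beta}$; moreover when $2\beta>1$ a single commutator $[(-\Delta_d)^\beta,m]$ still has positive order and cannot act on an $L^2$ test function, so iterated commutators are needed, each paying another $|\xi|/\delta$. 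Second, and more fundamentally, the resonant gain $\|\chi_\delta\hat f\|^2_{L^2(\Sm^d)}\lesssim(\delta/|\xi|)^{2\theta}\|\hat f\|^2_{H^\theta(\Sm^d)}$ via a one-dimensional fractional Sobolev inequality in $s=k\cdot\xi/|\xi|$ is not uniform over the sphere: when the resonant latitude $\{k\cdot\xi=-\tau\}$ sits near the poles $\pm\xi/|\xi|$ (i.e. $|\tau|\approx|\xi|$), the set $\{|P|\leq\delta\}$ is a spherical cap of geodesic radius $r\sim(\delta/|\xi|)^{1/2}$, the coordinate $s$ degenerates, and the best restriction estimate gains only $r^{2\theta}\sim(\delta/|\xi|)^{\theta}$ --- half the exponent you claim. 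This degenerate-cap regime is precisely what distinguishes the sphere from the Euclidean case of Bouchut.

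The arithmetic confirms these are not cosmetic slips: balancing your stated bounds $(\delta/\lambda)^{2\theta}$ against $\delta^{-2(1+2\beta)}$ with $\lambda=\langle\tau,\xi\rangle$ gives $\gamma=\theta(1+2\beta)/(\theta+1+2\beta)$, not the asserted $\theta/(2(1+2\beta)+\theta)$; restoring the missing $|\xi|^{4\beta}$ but keeping the band gain $(\delta/\lambda)^{2\theta}$ gives $\gamma=\theta/(1+2\beta+\theta)$, which would overshoot the theorem and the known results. Only with both corrections --- the worst-case cap gain $(\delta/\lambda)^{\theta}$ and the $(|\xi|/\delta)^{2\beta}$ cost on the source --- does the optimization $\lambda^{2\gamma}(\delta/\lambda)^{\theta}=\lambda^{2\gamma+4\beta}\delta^{-2(1+2\beta)}=1$ return exactly $\gamma=\theta/(2(1+2\beta)+\theta)$. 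So your outline identifies the correct mechanism, but as written the proof does not close: the uniform (in $\tau,\xi$, including near-polar latitudes) restriction estimates on $\Sm^d$ and a genuine iterated-commutator or regularized-resolvent treatment of the distributional source $(-\Delta_d)^\beta\hat h$ are the actual substance of the proof in \cite{hypo}, not bookkeeping to be deferred.
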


The fractional derivatives above are defined in the Fourier space, see the notation section further. Note that we stated here a slightly more general version of the theorem of \cite{hypo} that better suits our needs for this work: we added an additional source term $g$ in \fref{transporthypo} which essentially does not modify the original proof. The proof of Theorem \ref{hypo} follows the techniques of Bouchut \cite{bouchut}, that are adapted to the spherical geometry.  Replacing formally $(-\Delta_d )^\beta h $ in the theorem by $\calL f$, it is expected, using bootstrapping arguments, that $f$ has derivatives of any order in all variables. 

We will also address two asymptotic regimes. The first one is the diffusion regime where the long time behavior of solutions to \fref{transportL} is investigated. The main question is whether the singular nature of the collision operator leads to a different equation than the standard diffusion equation obtained for smooth collision kernels. The answer is no with the singularities that we consider, we obtain a perfectly defined diffusion matrix. The second regime is the peaked forward regime where grazing collisions are the most significant. In the case where collision are defined over $\Rm^{d+1}$, this leads to the well-known Fokker-Planck (or fractional Fokker-Planck) equation. Here, the situation is slightly different, the limiting collision operator is not a fractional Laplace-Beltrami operator, but an operator that shares the same high frequency behavior as the Laplace-Beltrami, but not the same low frequency behavior. We will base our asymptotic analysis on the probabilistic representation of the transport equation \fref{transportL}. This is mostly motivated by (future) numerical considerations: while standard discretizations of \fref{transportL}, using e.g. finite elements of finite volumes, might not be straightforward due to the singularity of the kernel, probabilistic methods offer a simple alternative. The operator $\calL$ can be seen as the generator of a jump Markov process, which can be easily simulated. The solution to \fref{transportL} is then obtained after averaging over several realizations of the process.

The paper is structured as follows: in Section \ref{prelim}, we introduce the notation and state the hypotheses on the kernel $F$. We present our main results in Section \ref{res}. Theorem \ref{th1} concerns the regularity theory, Theorem \ref{th2} the diffusion limit, and Theorem \ref{th3} the peaked forward limit. The probabilistic representation of the solution to \fref{transportL} is introduced before Theorem \ref{th2}. Section \ref{proofth1} is devoted to the proof of Theorem \ref{th1}, and includes important results on the operator $\calL$ that will be used in the proofs of Theorems \ref{th2} and \ref{th3}, given in Sections \ref{proofth2} and \ref{proofth3}, respectively.

To conclude this introduction, we would like to mention, that after this second work on radiative transfer equations with long-range interactions was completed, we became aware of the recent paper \cite{alonso} that addresses similar questions. Some differences are the following: it seems our hypoelliptic estimates of Theorem \ref{hypo} are sharper; our techniques of proof are different, we use in particular probabilistic techniques for the peaked forward regime instead of PDE techniques; the diffusion limit is not addressed in \cite{alonso}.  

{\bf Acknowledgment.} This work was supported in part by NSF grant
DMS-1311903, an AFOSR NSSEFF Fellowship, and  NSF CAREER grant DMS-1452349.

\section{Preliminaries} \label{prelim}

We introduce here some notations and the main properties of the collision kernel $F$.

 \paragraph{Notation.} We will denote by $\langle \cdot, \cdot \rangle$ the $L^2(\Sm^d)$ inner product. 
The Laplace-Beltrami operator $\Delta_d$ is defined by 
\bee
\Delta_d \varphi(z)&=& \left. \Delta \varphi\left(\frac{y}{\| y\|}\right) \right|_{y= z}, \qquad  z \in \Sm^d\\
&=&\left(\Delta-\sum_{i=1}^{d+1}\sum_{j=1}^{d+1} z_i
  z_j \partial_{z_i}\partial_{z_j}-d
  \sum_{i=1}^{d+1}z_i \partial_{z_i}\right)\varphi(z). 
\eee 
Here,
$\Delta$ is the standard Laplacian on $\Rm^{d+1}$. The eigenfunctions of
$-\Delta_d$ are the spherical harmonics $Y_{n,m}$, for $n \in \Nm$ and
$m=1,\cdots,M(d,n)$ where
$$
M(d,n)=(2n+d-1)\frac{\Gamma(n+d-1)}{\Gamma(d) \Gamma(n+1)},
$$
and are associated with the eigenvalues $\lambda_n=n(n+d-1)$. Above,
$\Gamma$ is the gamma function. We will denote by $e_0=1/(\sigma(\Sm^d))^{1/2}$ the first spherical harmonics $Y_{0,1}$. The Fourier representation of the
fractional Laplace-Beltrami operator is then, for $\beta \in
(0,\infty)$,
$$
(-\Delta_d )^\beta \varphi(k)=\sum_{n=0}^\infty \sum_{m=1}^{M(d,n)} \lambda_n^\beta \; \langle\varphi,Y_{n,m}\rangle Y_{n,m}(k),
$$
where convergence is understood in $L^2(\Sm^d)$.  
The Sobolev space $H^\theta(\Sm^d)$, for~$\theta>0$, is defined by
$$
H^\theta(\Sm^d)=\left\{ \varphi \in L^2(\Sm^d),
\quad (-\Delta_d )^\frac{\theta}{2} \varphi \in L^2(\Sm^d)\right\},
$$
and is equipped with the norm
$$
\|\varphi\|^2_{H^\theta(\Sm^d)}=\|\varphi\|^2_{L^2(\Sm^d)}+\|(-\Delta_d )^\frac{\theta}{2} \varphi\|^2_{L^2(\Sm^d)}.
$$
We will denote by $\calC^p_c$ the space of $\calC^p$ functions with compact support, and by $\calC^p_b$ the space of bounded $\calC^p$ functions. We will use the following convention for the Fourier transform in $\Rm^{d+1}$:
$$
\hat f(\xi)=\calF f(\xi)=\int_{\Rm^{d+1}} e^{- i x \cdot \xi} f(x) dx, \qquad \calF^{-1} f(x)=\frac{1}{(2\pi)^{d+1}} \int_{\Rm^{d+1}} e^{ i x \cdot \xi} \hat f(\xi) d\xi,
$$ 
and introduce the fractional derivative as 
\[
\partial_{x_j}^\gamma f(x)=\calF^{-1} [(i \xi_j)^\gamma \hat f(\xi)](x),
\]
with a similar definition for fractional derivatives involving the
time variable. We will denote by $\partial_{t,x}^\gamma f$ any of the fractional derivatives with respect to $t$ or $x_j$.

\paragraph{The collision kernel.} We suppose $F$ is non-negative and satisfies the following hypotheses: 
$$
\forall \delta \in (-1,1), \qquad F(s) (1-s^2)^\frac{d-2}{2} \in L^1((-1,\delta )),$$
 and there exists $a_1 \in (0,\infty)$ such that
\be \label{hypker}
\lim_{s \to 1}|1-s|^{\beta+\frac{d}{2}}F(s)=a_1, \qquad \beta \in (0,1).
\ee 
We will use the following decomposition of the kernel into smooth and singular parts: let first
$$ 
a_2(s)=\frac{1}{a_1}(|1-s|^{\beta+\frac{d}{2}}F(s)-a_1).
$$
According to \fref{hypker}, $a_2(s) \to 0$ as $s \to 1$, and therefore there exists $\delta \in (0,1)$ such that $|a_2(s)|<1$ for $s \in (\delta,1)$. Let then $\chi \in \calC^\infty([-1,1])$ such that $\chi(s)=1$ for $s \in [-1,\delta]$, and $\chi(s)=0$ for $s \in [\delta',1]$ with $\delta' \in (\delta,1)$. The kernel is finally written as
\be \label{decompF}
F(s)= \chi(s) F(s)+(1-\chi(s))\left(\frac{a_1(1+a_2(s))}{|1-s|^{\beta+\frac{d}{2}}}\right):=F_1(s)+F_2(s).
\ee
We have by construction $F_1 \in L^1((-1,1))$. 

Note that the so-called mean free path, defined as the inverse of the integral of $F$, is equal to zero since 
$$
\int_{\Sm^d} F( k \cdot  p) d\sigma( p) \sim \int_{\Sm^d} \frac{d\sigma( p) }{|1- k \cdot  p|^{\beta+\frac{d}{2}}} \sim \int^1_{-1}  \frac{(1-t^2)^\frac{d-2}{2}}{|1-t|^{\beta+\frac{d}{2}}} dt =\infty.
$$

For $f \in \calC^\infty(\Sm^d)$ (such a regularity is in fact not needed), the collision operator is defined more rigorously by
$$
\calL f( k)= \textrm{p.v.} \int_{\Sm^d} F( k \cdot  p) \left(f( p)-f( k)\right)d\sigma(p),
$$
where p.v. stands for the  principal value. It is shown in Lemma \ref{lemL} that $\calL f \in L^\infty(\Sm^d)$ for such $f$.\\

\section{Main results}\label{res}
We present two types of result: our first result concerns the regularity of the solutions to \fref{transportL}, and our other results address the asymptotic behavior of the solutions in two different regimes, the diffusion regime, and the peaked forward regime. We start with the regularity results.

\paragraph{Hypoelliptic estimates and regularity.} For any $f_0 \in L^2(\Rm^{d+1} \times \Sm^d)$, we say that $ f\in L^\infty((0,\infty),L^2(\Rm^{d+1} \times \Sm^d))$ is a weak solution to \fref{transportL}, if, for all $\varphi \in \calC^\infty([0,\infty) \times \Rm^{d+1} \times \Sm^d)$ with compact support in $x$, with in addition $\varphi=0$ for $t\geq T$, $T$ arbitrary,
\begin{align*}
&\int_0^\infty \int_{\Rm^{d+1}} \int_{\Sm^d} f(t,x, k) (\partial_t+  k \cdot \nabla_x+\calL ) \varphi(t,x, k) dt dx d\sigma( k)\\
&=-\int_{\Rm^{d+1}} \int_{\Sm^d} f^0(x, k) \varphi(0,x, k) dx d\sigma( k).
\end{align*}
Note that the definition makes sense since $\calL \varphi \in L^\infty(\Sm^d)$ when $\varphi \in \calC^\infty(\Sm^d)$ according to Lemma \ref{lemL}.
Our first theorem is the following:

\begin{theorem}(Regularity) \label{th1}The operator $\partial_t+ k \cdot \nabla_x-\calL$ is hypoelliptic. Namely, for any $f^0 \in L^2(\Rm^{d+1} \times \Sm^d)$, \fref{transportL} admits a unique weak solution $f$ that satisfies
$$
 f\in \calC^\infty((0,\infty) \times \Rm^{d+1} \times \Sm^d).
$$
\end{theorem}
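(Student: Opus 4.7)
The strategy is to combine an $L^2$ dissipation estimate on the sphere, which yields fractional Sobolev regularity in $k$, with Theorem \ref{hypo}, which transfers that regularity to the transport variables $(t,x)$, and then to bootstrap.

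First, using the symmetric representation
\[
\langle -\calL f, f\rangle \;=\; \tfrac{1}{2}\int_{\Sm^d}\!\int_{\Sm^d} F(k\cdot p)(f(p)-f(k))^2\, d\sigma(p)\,d\sigma(k) \;\geq\; 0,
\]
the operator $\calL$ is symmetric and nonpositive on $L^2(\Sm^d)$ with a natural form domain, while $k\cdot\nabla_x$ is skew-adjoint on $L^2(\Rm^{d+1}\times\Sm^d)$. Consequently $\calL-k\cdot\nabla_x$ generates a contraction semigroup, giving a unique weak solution in $\calC([0,\infty);L^2)$. Diagonalizing $\calL$ in the spherical-harmonic basis, the assumption \fref{hypker} should yield eigenvalues $-\mu_n$ with $\mu_n\sim c\lambda_n^\beta$ as $n\to\infty$, hence the key coercivity estimate
\[
\langle -\calL f, f\rangle \;\geq\; c\,\|(-\Delta_d)^{\beta/2} f\|_{L^2(\Sm^d)}^2 \;-\; C\,\|f\|_{L^2(\Sm^d)}^2.
\]
Integrating $\tfrac12\tfrac{d}{dt}\|f\|_{L^2}^2=\langle\calL f,f\rangle$ over $\Rm^{d+1}$ then delivers $(-\Delta_d)^{\beta/2} f\in L^2_{\mathrm{loc}}(\Rm_t\times\Rm^{d+1}\times\Sm^d)$.

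Next, I would split $\calL f = -(-\Delta_d)^\beta f + \calR f$, where the remainder $\calR$ is controlled via the decomposition $F=F_1+F_2$ of \fref{decompF}: the $F_1$-part is bounded on $L^2(\Sm^d)$, and the $F_2$-part differs from the integral kernel of $(-\Delta_d)^\beta$ on $\Sm^d$ by an operator of strictly lower order, as can be seen by comparing their spherical-harmonic symbols. After multiplying by a smooth time cutoff $\psi$ (which introduces the harmless source $\psi'f$), Theorem \ref{hypo} applied with $\theta=\beta$, $h=-\psi f$ and $g=\psi\calR f+\psi' f$ gives an initial gain $\partial^\gamma_{t,x}f\in L^2_{\mathrm{loc}}$ with $\gamma=\beta/(2+5\beta)$. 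Because $\calL$ and $k\cdot\nabla_x$ commute with translations in $x$ and with $\partial_t$, mollifications and difference quotients in $(t,x)$ allow the previous two steps to be iterated: at each stage the coercivity upgrades the $k$-regularity by $\beta$ and Theorem \ref{hypo} upgrades the $(t,x)$-regularity by $\gamma$. For the $k$-variable, once $(t,x)$-smoothness is in hand the identity $-(-\Delta_d)^\beta f = \calR f-\partial_t f - k\cdot\nabla_x f$ inverts modulo constants to yield additional $2\beta$ derivatives in $k$. A Sobolev embedding in all variables then gives $f\in \calC^\infty((0,\infty)\times\Rm^{d+1}\times\Sm^d)$.

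The two main obstacles I anticipate are: (i) proving the coercivity estimate with its stated form under only the asymptotic hypothesis \fref{hypker}, which requires a careful spherical-harmonic analysis of $\calL$ to isolate the leading $\lambda_n^\beta$ behavior from low-frequency contributions of $F_1$; and (ii) closing the $k$-regularity bootstrap cleanly, since $\calL$ is only asymptotically a fractional Laplace--Beltrami and its inversion must be handled by absorbing the commutator $[\calL+(-\Delta_d)^\beta, \cdot]$ using the $(t,x)$-regularity already obtained at the previous iteration. Both difficulties are local-in-time, so the smooth-cutoff trick ensures the $L^2(\Rm_t\times\cdots)$ hypotheses of Theorem \ref{hypo} are met at every stage.
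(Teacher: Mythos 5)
Your overall architecture (energy estimate giving $H^\beta$ in $k$, Theorem \ref{hypo} transferring regularity to $(t,x)$, bootstrap, then a final gain in $k$) matches the paper's, but two of your key steps rest on a claim that is false under the stated hypotheses. You split $\calL f=-(-\Delta_d)^\beta f+\calR f$ and assert that the remainder $\calR$ is of \emph{strictly} lower order. Hypothesis \fref{hypker} only says $|1-s|^{\beta+d/2}F(s)\to a_1$, i.e.\ in the decomposition \fref{decompF} the correction $a_2(s)\to 0$ with \emph{no rate}; consequently the spherical-harmonic multipliers of $\calR$ are merely $o(n^{2\beta})$, not $O(n^{2\beta-\delta})$ for any $\delta>0$ (even the comparison of the exact kernel $a_1|k-p|^{-2\beta-d}$ with $(-\Delta_d)^\beta$, which does lose a full power via Samko's multipliers $R_n$, cannot absorb the $a_2$ part). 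Two consequences: (i) for $f\in H^\beta(\Sm^d)$ one only gets $\calR f$ in a weighted $H^{-\beta}$-type space, not $L^2$, so placing $\psi\,\calR f$ in the $g$-slot of Theorem \ref{hypo} is unjustified and your very first application of the hypoelliptic estimate fails; (ii) your final $k$-bootstrap collapses, since from $f\in H^s$ in $k$ the identity gives $\calR f\in H^{s-2\beta}$, hence after inverting $(-\Delta_d)^\beta$ only $f\in H^s$ again — high-frequency \emph{smallness} of the multipliers is not extra \emph{regularity}, so each iteration gains nothing. The paper avoids the splitting altogether: Lemma \ref{lemL}(iv) factors $\calL f=(-\Delta_d)^{\beta/2}h$ with $h\in L^2(\Sm^d)$ and $\|h\|_{L^2}\leq C\|f\|_{H^\beta}$, proved by duality from the form bound \fref{contL} alone; Theorem \ref{hypo} is then applied with exponent $\beta/2$ (whence $\gamma=\beta/(2+3\beta)$ rather than your $\beta/(2+5\beta)$ — the smaller gain would itself be harmless), and the $(t,x)$-induction is closed by the auxiliary estimate \fref{estimDD} of Proposition \ref{existweak}, which propagates $H^\beta$-regularity in $k$ to each fractional derivative instead of trying to increase the $k$-regularity along the way.

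For the $\calC^\infty$ regularity in $k$, the paper's mechanism is genuinely different and is not an elliptic inversion: it uses that $A=-\bar\calL+\mathbb{I}$ is self-adjoint and sectorial, hence generates a holomorphic semigroup with $\|A^pS(t)\varphi\|_{L^2}\leq Ct^{-p}\|\varphi\|_{L^2}$; a Duhamel representation of $u\chi$ (the transport term $-\chi\, k\cdot\nabla_x u$ being controlled by the already-established $(t,x)$-smoothness) gives inductively $A^{p/2}u\in L^\infty((0,T),L^2)$ for all $p$, and the form equivalence \fref{boundQ} together with the Funk--Hecke commutation of $A$ with $(-\Delta_d)^{\beta/2}$ converts powers of $A$ into powers of $(-\Delta_d)^{\beta/2}$. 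This needs only the two-sided comparison of the quadratic form at order $\beta$, never a lower-order remainder, which is exactly why it succeeds where your commutator scheme in obstacle (ii) cannot. Two smaller points: your contraction-semigroup construction yields uniqueness of mild solutions, whereas the theorem asserts uniqueness of \emph{weak} solutions in the stated distributional sense — the paper proves this via a regularized adjoint-problem argument that your sketch omits; and existence itself is obtained in the paper by regularizing the kernel $F$ and passing to the limit, which is what legitimizes the a priori manipulations you perform on the formal solution.
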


The proof of Theorem \ref{th1} is given in Section \ref{proofth1}. It is based on a regularization procedure in order to obtain, along with uniform estimates, existence and uniqueness of weak solutions. We then use Theorem \ref{hypo} in order to gain better regularity in the $(t,x)$ variables first, and show in a second step the improved regularity in the $k$ variable.

\paragraph{Probabilistic representation and asymptotics.} We give below a probabilistic interpretation of the solutions to $\fref{transportL}$, and perform the asymptotic analysis in the probabilistic framework. The starting point is the fact, proved in Lemma \ref{lemL} further, that the operator $\calL$ defined on $\calC^\infty$ can be extended to a unique non-positive self-adjoint operator  $\bar \calL$ with domain $\calD(\bar \calL)=\{\varphi \in H^\beta(\Sm^d), \bar \calL \varphi \in L^2(\Sm^d)\}$. Moreover, $-\bar \calL$ is associated to a quadratic form $\calQ: H^\beta(\Sm^d) \times H^\beta(\Sm^d) \to \Rm_+ $ given by
$$
\calQ(f,g)=\frac{1}{2}\int_{\Sm^d}\int_{\Sm^d} F(k \cdot p)(f(k)-f(p))(g(k)-g(p))d\sigma(k) \sigma( p).
$$
Then, according to \cite[Theorem 7.2.1 pp. 302]{fuku}, there exists a Markov process 
\[\mathbf{M}=(\Omega,\calF,(m(t))_{t\geq0},(\mathbb{P}_{k})_{k\in\Sm^d})\]
 on $\Sm^d$, c\`adl\`ag (right continuous with left limits), with generator $\bar \calL$, and where $(\Omega,\calF)$ is a measurable space. The subscript $k \in \Sm^d$ in the measure $\mathbb{P}_{k}$ is the initial condition of the Markov process. For $x\in \Rm^{d+1}$, considering then 
 \[ X_x(t)=\Big(x-\int_0^t m(u) du ,  m(t)\Big),\]
we have a Markov process on $\Rm^{d+1}\times \Sm^d$ with generator 
\[\tilde{\calL}\varphi=-k\cdot \nabla_x \varphi+\bar\calL \varphi, \quad \textrm{whose domain includes }\quad \calC^1(\Rm^{d+1},\calD(\bar \calL)).\]
Introducing $T_t$ the semigroup $T_t f^0(x,k)=\E_k[f^0(X_{x}(t))]$ associated to $\tilde \calL$, where $\E_k$ is the expectation with respect to the measure $\mathbb{P}_{k}$, it follows, for instance from \cite[Proposition 1.5 pp. 9]{ethier}, that for all $f^0\in \calC^1(\Rm^{d+1},\calD(\bar \calL))\cap L^2(\Rm^{d+1}\times \Sm^d)$, the function $u(t,x,k):=T_t f^0(x,k) \in \calC^1(\Rm^{d+1},\calD(\bar \calL))$ satisfies, 
$$
\partial_t u =\tilde{\calL} u, \qquad \textrm{for all} \quad (t,x,k)\in (0,+\infty) \times \Rm^{d+1}\times \Sm^d.
$$
Above and in the sequel, we use the shorthand $f^0(X_{x}(t))$ for $f^0((X_{x}(t))_1,(X_{x}(t))_2 )$. According to Theorem \ref{th1}, the latter equation admits a unique smooth solution, and therefore $u(t,x,k)=f(t,x,k)$. The semigroup $T_t$ can  be extended to $L^2(\Rm^{d+1}\times \Sm^d)$ thanks to the following estimate, proved in Proposition \ref{existweak},
$$
\| f\|_{L^\infty((0,\infty),L^2(\Rm^{d+1}\times \Sm^d)} \leq \| f_0\|_{L^2(\Rm^{d+1}\times \Sm^d)}.
$$
In this probabilistic framework, we therefore have
$$f(t,x,k)=T_t f^0(x,k)=\E_k[f^0(X_{x}(t))],
$$
and we investigate now two asymptotic regimes. The first one is the diffusion regime, where strong collision effects are investigated in the long time limit. There are two equivalent ways to study the limit, either by rescaling the collision operator as $\calL\to \calL /\eps$ and the time variable as $t \to t/\eps$, or by the change of variables (this is the route we follow here)
$$
t\to \frac{t}{\eps^2}, \qquad x\to \frac{x}{\eps}, \qquad  f(t,x,k) \to f^\eps(t,x,k)=f\Big(\frac{t}{\eps^2},\frac{x}{\eps},k\Big),
$$ 
with $f^\eps(0,x,k)=f^0(x,k)$. The transport equation \fref{transportL} then becomes
\be\label{diffusioneq}
\eps^2 \partial_t f^\eps+ \eps  k \cdot \nabla_x f^\eps=\calL f^\eps.
\ee
In this regime, the probabilistic representation is obtained by considering the rescaled Markov process
\[X^\eps_x(t)=\Big(x-\eps\int_0^{t/\eps^2} m(u) du , m(t/\eps^2)\Big),\]
with generator 
\[\tilde{\calL}_\eps\varphi=-\frac{1}{\eps}k\cdot \nabla_x \varphi+\frac{1}{\eps^2}\bar \calL \varphi.\]
The solution to \eqref{diffusioneq}  then reads
$$f^\eps(t,x,k)=T^\eps_tf^0(x,k)=\E_k[f^0(X^\eps_{x}(t))]. 
$$

Our second result is the following:
\begin{theorem} \label{th2} (Diffusion limit) Suppose there exists $\delta>0$ such that $F \geq \delta$ a.e. on $(-1,1)$, and let $Y_{x}^\eps=x-\eps\int_0^{t/\eps^2}m(u) du$. Then, for all $x\in \Rm^{d+1}$, the process $Y_{x}^\eps$ converges in law in $\calC^0([0,\infty), \Rm^{d+1})$ to a diffusion process $Y_x$, starting at $x$, with generator
$$
\tilde{\calL}_0= \nabla_x \cdot D \nabla_x,
$$ 
where the positive-definite diffusion matrix $D$ is given by 
$$
D_{jl}=\frac{1}{\mathfrak{C}}\int_{\Sm^d} d\tilde{\sigma}(k) k_jk_l \qquad\text{with}\qquad \mathfrak{C}=\sigma(\Sm^{d-1})\int_{-1}^1F(s)(1-s^2)^{(d-2)/2}(1-s)ds.
$$
Above, $\tilde{\sigma}$ is the uniform measure on $\Sm^{d}$ (i.e. $\tilde{\sigma}=\sigma/\sigma(\Sm^d)$). Moreover, for any $f^0 \in L^2(\Rm^{d+1}\times \Sm^d)$ and for all $t>0$, $f_\eps(t)$ converges weakly in $L^2(\Rm^{d+1}\times \Sm^d)$ to the unique solution to
\begin{equation}\label{eqdiffusion}
\partial_t f= \nabla_x \cdot \big(D\nabla_x f\big) \qquad\text{with}\qquad f(0,x)=\int_{\Sm^d} d\tilde{\sigma}(p) f^0(x,p),
\end{equation}
and the function $f$ reads
$$
f(t,x)=\E^{Y_x}[f(0,y_t)]
$$
where $ \E^{Y_x}$ denotes expectation with respect to the law of $Y_x$, and $y$ is the canonical map defined by $y_t(\omega)=\omega(t)$ for all $\omega\in \calC^0([0,\infty), \Rm^{d+1})$.
\end{theorem}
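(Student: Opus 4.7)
The plan is to use the classical martingale method for diffusion approximations of ergodic Markov processes on $\Sm^d$. The key ingredient is an explicit solution to the cell (Poisson) equation, which is available because of the rotational invariance of the kernel. A direct computation for each coordinate function $\phi_j(k):=k_j$ yields
$$\bar{\calL}\phi_j(k)=\int_{\Sm^d} F(k\cdot p)(p_j-k_j)\,d\sigma(p)=-\mathfrak{C}\,k_j,$$
where $\mathfrak{C}$ is the constant from the statement, obtained by writing $p=sk+\sqrt{1-s^2}\,\omega$ with $\omega\in\Sm^{d-1}\subset k^\perp$ and using $\int_{\Sm^{d-1}}\omega\,d\sigma_{d-1}=0$; finiteness of $\mathfrak{C}$ follows from $\beta<1$ in \eqref{hypker}. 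Hence $\chi_j:=k_j/\mathfrak{C}$ is a bounded smooth solution of $-\bar\calL\chi_j=\phi_j$. The measure $\tilde\sigma$ is reversible for $m(t)$ since $\bar\calL$ is self-adjoint on $L^2(\Sm^d,\sigma)$ with $\bar\calL 1=0$, and the hypothesis $F\geq\delta>0$ combined with the Funk--Hecke diagonalization of $\bar\calL$ in spherical harmonics yields a spectral gap and exponential ergodicity of $m(t)$.

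Dynkin's formula applied to $\chi_j$ gives, under any $\mathbb{P}_k$,
$$Y^\eps_x(t)-x=-\eps\int_0^{t/\eps^2} m(u)\,du=\eps\bigl[\chi(m(t/\eps^2))-\chi(m(0))\bigr]-\eps M^\chi_{t/\eps^2},$$
where $M^\chi$ is a c\`adl\`ag $L^2$-martingale. The boundary terms are $O(\eps)$ since $\chi$ is bounded. The predictable quadratic variation of $\eps M^\chi_{t/\eps^2}$ equals $\eps^2\int_0^{t/\eps^2}\Gamma(\chi_j,\chi_l)(m(u))\,du$, where $\Gamma$ is the carr\'e-du-champ associated to $\bar\calL$. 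By the ergodic theorem it converges a.s.\ to $2tD_{jl}$ with $D_{jl}=\langle\chi_j,-\bar\calL\chi_l\rangle_{\tilde\sigma}=\mathfrak{C}^{-1}\int_{\Sm^d}k_jk_l\,d\tilde\sigma(k)$, matching the claimed diffusion matrix, which is manifestly symmetric positive-definite. Tightness of $Y^\eps_x$ in $\calC^0([0,\infty),\Rm^{d+1})$ follows from $|m|\equiv 1$, and the martingale central limit theorem (Jacod--Shiryaev) identifies the limit of $\eps M^\chi_{t/\eps^2}$ as a Brownian motion with covariance $2D$; combined with the vanishing boundary terms, this gives convergence in law of $Y^\eps_x$ to the diffusion $Y_x$ with generator $\nabla_x\cdot D\nabla_x$.

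For the weak $L^2$ convergence of $f^\eps(t,\cdot)$, I would first take $f^0,\varphi\in\calC^\infty_c(\Rm^{d+1}\times\Sm^d)$ and test the representation $f^\eps(t,x,k)=\E_k[f^0(X^\eps_x(t))]$ against $\varphi$. Decomposing $\varphi(x,k)=\bar\varphi(x)+\psi(x,k)$ with $\int\psi(x,k)\,d\tilde\sigma(k)=0$, the first piece gives, via continuous mapping applied to $Y^\eps_x$ together with Fubini in $k$,
$$\int\E_k[f^0(X^\eps_x(t))]\,\bar\varphi(x)\,dx\,d\sigma(k)\longrightarrow\int\E^{Y_x}[\bar f^0(y_t)]\,\bar\varphi(x)\,dx,$$
while the contribution of $\psi$ vanishes by a perturbed test function argument: one solves a further cell problem $-\bar\calL\Psi=\psi$ in the $k$-variable and uses exponential mixing to bound the resulting remainder by $O(\eps)$. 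The a priori bound $\|f^\eps\|_{L^\infty_tL^2_{x,k}}\leq\|f^0\|_{L^2}$ from Proposition \ref{existweak} then extends the convergence to arbitrary $f^0\in L^2$ by density, and uniqueness for the limiting equation \eqref{eqdiffusion} follows from standard parabolic theory since $D$ is positive-definite. The main obstacle I anticipate is exactly this asymptotic decorrelation between the slow spatial component $Y^\eps_x(t)$ and the fast sphere variable $m(t/\eps^2)$: the $k$-coordinate does not converge in any strong sense but only mixes to $\tilde\sigma$, so a direct joint weak limit of $X^\eps_x(t)$ is delicate and is best handled by the perturbed test function / cell-problem device above, which leverages the spectral gap established in the first paragraph.
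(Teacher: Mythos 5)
Your treatment of the convergence of $Y^\eps_x$ is correct and, at its core, the same as the paper's: both rest on the spectral gap and on the observation that the coordinate functions are exact eigenfunctions, $\bar\calL k_j=-\mathfrak{C}\,k_j$, which is precisely the computation in Lemma \ref{coefD} (the paper evaluates $D$ through the Kubo formula $D_{jl}=\int_0^\infty\E_{\tilde\sigma}[m_j(0)m_l(u)]\,du$ using $\E_k[m_l(t)]=k_l e^{-\mathfrak{C}t}$; your carr\'e-du-champ evaluation $\E_{\tilde\sigma}[\Gamma(\chi_j,\chi_l)]=2\langle\chi_j,-\bar\calL\chi_l\rangle_{\tilde\sigma}=2D_{jl}$ gives the same matrix). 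Where you differ is that you carry out the invariance principle by hand (Dynkin plus martingale FCLT with an exact corrector) instead of citing the diffusion-approximation theorems of \cite{FGPS-07,garnier_esaim} as the paper does; this is a legitimate and arguably more self-contained route, since the exactness of $\chi_j=k_j/\mathfrak{C}$ removes all the approximation issues those theorems are built to handle. Two caveats, though. First, your spectral gap "via Funk--Hecke and $F\geq\delta$" needs a uniform-in-$n$ lower bound on the eigenvalues, which you do not supply; the paper instead verifies the kernel hypothesis of Lemma \ref{lemL}(iii) from $F\geq\delta$ and gets the gap from the coercivity comparison with $\calR_\beta$. Second, tightness does \emph{not} "follow from $|m|\equiv1$": the paths $t\mapsto Y^\eps_x(t)$ have Lipschitz constant $1/\eps$. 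This slip is repairable inside your own scheme — the boundary terms are $O(\eps)$, $\Gamma(\chi_j,\chi_j)$ is bounded because $F(s)(1-s)(1-s^2)^{(d-2)/2}$ is integrable by \eqref{hypker}, so quadratic-variation increments are $O(|t-s|)$ and the jumps of $\eps M^\chi_{t/\eps^2}$ are $O(\eps)$, and the martingale FCLT then delivers C-tightness and identification together — but as written the claim is false.

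The genuine gap is in the weak-$L^2$ step. For the $\bar\varphi$ piece you invoke "continuous mapping applied to $Y^\eps_x$ together with Fubini in $k$", but $f^0(X^\eps_x(t))=f^0\big(Y^\eps_x(t),m(t/\eps^2)\big)$ depends on the \emph{terminal} fast variable, so continuous mapping on $Y^\eps_x$ alone cannot produce the averaged datum $\bar f^0$ in the limit: what is needed is asymptotic independence of $\big(Y^\eps_x(t),m(t/\eps^2)\big)$ with second marginal $\tilde\sigma$. You correctly identify this decorrelation as the main obstacle, but your cell-problem device, as described, only treats the mean-zero part $\psi(x,k)$ of the \emph{test function}, not the $k$-dependence of $f^0$ inside the expectation, so the argument for the first piece is incomplete. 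The paper circumvents this entirely in Lemma \ref{ergodic}: using the spectral gap at the PDE level (a Gronwall estimate on $\|T^\eps_t\phi\|_{L^2}$ with $\phi=f^0-\langle f^0,\mathbf{1}\rangle_{L^2(\tilde\sigma)}$, plus a separate weak-formulation argument for the $k$-average $\Psi^\eps$), it shows $f^\eps-\tilde f^\eps\to0$ weakly, where $\tilde f^\eps(t,x,k)=\E_k[\bar f^0(Y^\eps_x(t))]$ already involves only the averaged initial condition; convergence in law of $Y^\eps_x$ is then applied to $\tilde f^\eps$ alone, where no decorrelation is required. To salvage your route you would either have to prove the joint limit directly (e.g., discard the last $O(1)$ stretch of fast time, which displaces $Y^\eps_x$ by $O(\eps)$, and apply the mixing estimate \eqref{specgap} at time $t/\eps^2$), or adopt the paper's energy estimate; the final density argument from $\calC^0_b\cap L^2$ to general $L^2$ data via $\|f^\eps\|_{L^\infty_tL^2_{x,k}}\leq\|f^0\|_{L^2}$ is the same in both.
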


Let us give a few comments on this theorem. The hypothesis that $F$ is strictly positive is needed for the spectral gap estimate of Lemma \ref{lemL}. Moreover, the constant $\mathfrak{C}$ is non-zero and finite, and so does $D$ since, according to \fref{hypker}, 
$$
F(s)(1-s^2)^{(d-2)/2}(1-s) \sim (1-s)^{-\beta} \in L^1((-1,1)) \quad \textrm{for} \quad \beta \in (0,1).
$$
Finally, probabilistic techniques yield slightly stronger convergence results than standard techniques that would provide weak-$\star$ convergence in $L^2((0,T) \times \Rm^{d+1}\times \Sm^d)$. Here, convergence is pointwise in $t$ and weak in $L^2$ for $L^2$ initial conditions. Naturally, convergence can be made stronger by introducing the first order corrector, we did not pursue this route here.

The main ingredient in the proof (given in Section \ref{proofth2}) is a spectral gap estimate that shows that the Markov process $Y^\eps_{x}$ is ergodic, which allows us to use standard diffusion-approximation theorems to obtain convergence.

Our last result concerns the peaked forward limit. In this regime, the main contribution to the scattering process is due to grazing collisions. This is translated to the kernel $F$ by supposing that $F(s)$ is small for $s \neq 1$, and that $F(1)$ is large. According to \fref{hypker}, this suggests the scaling
\be \label{equivpeak}
F^\eps(s)=\eps^{\beta+d/2} K(\eps(1-s)),\qquad\text{with}\qquad K(t)\underset{t\to0}{\sim} \frac{a_1}{\vert t \vert^{\beta+d/2}},
\ee
and $(1-s^2)^\frac{d-2}{2} K(1-s) \in L^1((-1,\delta)),$ for all $\delta \in (-1,1)$. The rescaled generator is accordingly $\bar \calL_\eps$, and the corresponding Markov process is now
 \[\mathbf{M}^\eps=(\Omega,\calF,(m^\eps(t))_{t\geq0},(\mathbb{P}_k)_{k\in\Sm^d}).\]
  Consider then 
 \[ X^\eps_{x}(t)=\Big(x-\int_0^t m^\eps(u) du , m^\eps(t)\Big),\]  
with generator 
\[\tilde{\calL}_\eps\varphi=-k\cdot \nabla_x \varphi+\bar\calL_\eps \varphi,\]
and associated semigroup $T^\eps_t$. Then, the function $f^\eps(t,x,k)=T^\eps_tf^0(x,k)=\E_k[f^0(X^\eps_{x}(t))]$ satisfies
\begin{equation}\label{peak}
 \partial_t f^\eps+  k \cdot \nabla_x f^\eps=\bar{\calL}_\eps f^\eps.
\end{equation}
Above, $\E_k$ denotes expectation with respect to the measure $\mathbb{P}_k$. Let us denote by $\mathbb{X}^\eps_{x,k}$ the law of $X^\eps_x$ starting at $(x,k)$. Denoting by $\calD([0,\infty))$ the space of c\`adl\`ag functions equipped with the Skorohod topology \cite{billingsley1}, our third result is the following: 
\begin{theorem} \label{th3}  (Peaked forward limit) For all $(x,k)\in \Rm^{d+1}\times\Sm^d$, the measure $\mathbb{X}^\eps_{x,k}$ converges weakly in $\calD([0,\infty), \Rm^{d+1}\times \Sm^d)$ to a mesure $\mathbb{X}_{x,k}$, which is the law of a diffusion process starting at $(x,k)$ with generator
$$
\tilde{\calL}_0= -k \cdot \nabla_x +\calL_\beta,
$$ 
where 
$$
\calL_\beta \varphi(k)=\textrm{p.v.} \; a_1\int_{\Sm^d} \frac{\varphi(p)-\varphi(k)}{\vert p- k \vert^{2\beta+d}}  d\sigma(p).
$$
Moreover, for any $f^0 \in L^2(\Rm^{d+1}\times \Sm^d)$ and all $t>0$, $f^\eps(t)$ converges weakly in $L^2(\Rm^{d+1}\times \Sm^d)$ to the unique solution to
\begin{equation} \label{FP}
\partial_t f+ k \cdot \nabla_x f=\calL_\beta f\qquad\text{with}\qquad f(0,x,k)=f^0(x,k),
\end{equation} 
and the function $f$ reads
$$
f(t,x,k)=\E^{\mathbb{X}_{x,k}}[f^0(y_t)],
$$
where $\E^{\mathbb{X}_{x,k}}$ denotes the expectation with respect to $\mathbb{X}_{x,k}$, and $y$ is the canonical map defined by $y_t(\omega)=\omega(t)$ for all $\omega\in \calD([0,\infty), \Rm^{d+1}\times \Sm^d)$. When $f^0 \in L^2(\Rm^{d+1}\times \Sm^d) \cap \calC_b^0(\Rm^{d+1}\times \Sm^d)$, then the convergence holds pointwise in $(t,x,k)$.
\end{theorem}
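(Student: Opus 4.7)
The plan is to follow the Ethier-Kurtz framework for weak convergence of Markov processes: prove convergence of the generators on a suitable core, establish tightness, identify the limit through a martingale problem, and then transfer convergence from the processes to the semigroups and weak PDE solutions. Throughout, Theorem \ref{th1} plays the role of providing enough regular solutions to the limiting PDE to resolve uniqueness issues.

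The first step is uniform convergence of the generators. For $\varphi\in \calC^\infty(\Sm^d)$, I would show that $\bar\calL_\eps\varphi\to \calL_\beta\varphi$ uniformly on $\Sm^d$. Writing $1-k\cdot p = |k-p|^2/2$ and fixing a cutoff $\chi$ as in \fref{decompF}, split the integral into a region $|k-p|\leq \eta$ and its complement. On the complement, $\eps(1-s)$ is bounded below so $K$ is integrable, and the prefactor $\eps^{\beta+d/2}$ kills this term. On the singular region, the scaling $\eps(1-s)\to 0$ pushes $K$ into its asymptotic regime \fref{equivpeak}, giving a pointwise limit proportional to $|k-p|^{-(2\beta+d)}$. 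Uniformity follows from the smoothness of $\varphi$ (allowing a second-order Taylor expansion, whose odd part cancels by symmetry so the singular integral exists as a principal value with $\beta<1$). Combining this with the unchanged transport part yields $\tilde\calL_\eps\varphi\to \tilde \calL_0\varphi$ uniformly for $\varphi$ in the core $\calC^2_c(\Rm^{d+1})\otimes \calC^\infty(\Sm^d)$.

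The second step is tightness of $(\mathbb{X}^\eps_{x,k})_\eps$ in $\calD([0,\infty),\Rm^{d+1}\times\Sm^d)$. The position component is $1$-Lipschitz in $t$ because $|m^\eps|=1$, hence is automatically tight in $\calC^0$. For the velocity component on the compact space $\Sm^d$, I would apply Aldous' criterion: for any $\varphi\in\calC^\infty(\Sm^d)$, $M^\eps_\varphi(t):=\varphi(m^\eps(t))-\varphi(m^\eps(0))-\int_0^t \bar\calL_\eps\varphi(m^\eps(s))ds$ is a martingale, and the uniform bound on $\bar\calL_\eps\varphi$ from Step 1 gives the required oscillation control at optional times. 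The third step identifies any subsequential limit $\mathbb{X}_{x,k}$ as a solution to the martingale problem for $\tilde\calL_0$: the martingale property for $M^\eps_\varphi$ passes to the limit thanks to uniform generator convergence and tightness.

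For uniqueness of the martingale problem, I would use Theorem \ref{th1}: for any $f^0\in \calC^\infty_c(\Rm^{d+1}\times\Sm^d)$ the Cauchy problem \fref{FP} admits a unique smooth solution $f(t,x,k)$, so if $\mathbb{X}_{x,k}$ solves the martingale problem, then $t\mapsto \E^{\mathbb{X}_{x,k}}[f(T-t,y_t)]$ is constant on $[0,T]$, which forces $\E^{\mathbb{X}_{x,k}}[f^0(y_T)] = f(T,x,k)$. Since smooth compactly supported functions are determining on $\Rm^{d+1}\times\Sm^d$, this pins down all one-dimensional marginals of $\mathbb{X}_{x,k}$, and the Markov property upgrades this to uniqueness of the finite-dimensional distributions. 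Hence the full family converges, not just along subsequences. Finally, the statements about $f^\eps$ follow from the probabilistic representation: for $f^0\in \calC^0_b\cap L^2$, weak convergence of $\mathbb{X}^\eps_{x,k}$ and continuity/boundedness of $\omega\mapsto f^0(y_t(\omega))$ yield pointwise convergence $f^\eps(t,x,k)\to f(t,x,k)$; the contractivity bound $\|f^\eps(t)\|_{L^2}\leq \|f^0\|_{L^2}$ from Proposition \ref{existweak} combined with density of $\calC_b^0\cap L^2$ in $L^2$ then upgrades this to weak $L^2$ convergence for general initial data.

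The main obstacle is Step 1: establishing the generator convergence uniformly, in particular controlling the principal value near the diagonal $k=p$ while simultaneously handling the non-asymptotic part of $K$ via the cutoff. A secondary difficulty is that uniqueness of the martingale problem is not classical here because $\calL_\beta$ is a non-local, singular integral operator on $\Sm^d$; the argument relies crucially on Theorem \ref{th1} to produce sufficiently many smooth solutions of \fref{FP}.
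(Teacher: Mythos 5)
Your overall architecture is the same as the paper's: the paper proves tightness of the velocity law $\mathbb{M}^\eps_k$ via Aldous' criterion (Lemma \ref{relatcomp}, using the coordinate martingales and their quadratic variation), proves uniform convergence of the generators (Lemma \ref{cvgene}), identifies subsequential limits as solutions of the martingale problem for $\bar\calL_\beta$, invokes well-posedness of that martingale problem through the uniqueness of solutions to \fref{FP} supplied by Theorem \ref{th1} (citing Ethier--Kurtz, which is exactly your duality argument $t\mapsto \E[f(T-t,y_t)]$ made abstract), and then transfers to $f^\eps$ by pointwise convergence for $\calC^0_b\cap L^2$ data followed by contractivity and density for general $L^2$ data. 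The only structural difference is cosmetic: the paper works exclusively with $\mathbb{M}^\eps_k$ and pushes forward by the map $\Theta_x(\omega)=\big(x-\int_0^\cdot\omega(u)du,\omega(\cdot)\big)$, which is continuous for the Skorohod topology, whereas you treat the position component separately via its $1$-Lipschitz property; these are equivalent.

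There is, however, one concrete error in your Step 1. You claim that on the region $|k-p|>\eta$ the argument $\eps(1-s)$ is ``bounded below'' and that the prefactor $\eps^{\beta+d/2}$ kills this contribution. Both halves are wrong: since $1-s\leq 2$, the argument satisfies $\eps(1-s)\leq 2\eps\to 0$ \emph{uniformly over the whole sphere}, so the far region is also driven into the asymptotic regime of $K$, and there $\eps^{\beta+d/2}K(\eps(1-s))\to a_1(1-s)^{-\beta-d/2}$, which is a bounded but nonvanishing kernel. If you genuinely discarded the far region you would converge to a truncated operator, not to $\calL_\beta$, whose regular (off-diagonal) part comes precisely from that region. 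The fix is immediate and in fact simplifies your argument to the paper's Lemma \ref{cvgene}: no splitting is needed at all, because \fref{equivpeak} gives, for every $\mu>0$ and all $\eps$ small, the single uniform bound $|\eps^{\beta+d/2}K(\eps(1-s))-a_1|1-s|^{-\beta-d/2}|\leq \mu|1-s|^{-\beta-d/2}$ on all of $(-1,1)$; combined with the second-order Taylor expansion of $\varphi$ and the cancellation $\int_{\Sm^{d-1}}u\,d\sigma(u)=0$, this yields $\|\calL_\eps\varphi-\calL_\beta\varphi\|_{L^\infty}\leq C\mu$ since $\beta\in(0,1)$. With that repair, your proof matches the paper's. (A shared, minor gloss in both your write-up and the paper: the evaluation map $\omega\mapsto\omega(t)$ is only continuous in the Skorohod topology at paths continuous at $t$, so the pointwise convergence for $\calC^0_b$ data implicitly uses that fixed times are a.s. continuity points of the limit process.)
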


Note that the operator $\calL_\beta$ is not the fractional Laplace-Beltrami operator. It would be in the Euclidean case if $\Sm^d$ were replaced by $\Rm^{d+1}$. Nevertheless, the high frequency behavior of $\calL_\beta$ is the same as the Laplace-Beltrami operator $(\Delta_d)^{\beta}$ as will be clear in the proof of Lemma \ref{lemL}. As in the diffusion limit, we also observe slightly stronger convergence results with the probabilistic techniques, in particular pointwise convergence in $(t,x,k)$ for bounded and continuous initial conditions. Note as well that the solutions to \fref{FP} are $\calC^\infty$ according to Theorem \ref{th1}.
\section{Proof of Theorem \ref{th1}} \label{proofth1}

The proof is divided into several steps. We first derive some important results on the operator $\calL$. In a second time, we regularize the kernel $F$ in order to show the existence and uniqueness of weak solutions along with some energy estimates. We finally obtain the $\calC^\infty$ regularity using Theorem \ref{th1}, first with respect to the $(t,x)$ variables, and then with respect to the momentum $k$.

\subsection{Step 0: Properties of the operator $\calL$}

\begin{lemma} \label{lemL} The operator $\calL$ defined on $\calC^\infty(\Sm^d)$ satisfies the following properties:
\begin{itemize}
\item[(i)] For any $f \in \calC^\infty(\Sm^d)$, $\calL f \in L^\infty(\Sm^d)$.
\item[(ii)] $\calL$ can be extended to a unique non-positive self-adjoint operator $\bar \calL$ with domain $\calD(\bar \calL)=\{\varphi \in H^\beta(\Sm^d), \bar \calL \varphi \in L^2(\Sm^d)\}$. Moreover, $-\bar \calL$ is associated to a quadratic form $\calQ: H^\beta(\Sm^d) \times H^\beta(\Sm^d) \to \Rm_+ $ given by
$$
\calQ(f,g)=\frac{1}{2}\int_{\Sm^d}\int_{\Sm^d} F(k \cdot p)(f(k)-f(p))(g(k)-g(p))d\sigma(k) \sigma( p),
$$
which satisfies the estimate, for some $C>0$,
\be \label{boundQ}
C \|f\|^2_{H^\beta(\Sm^d)} \leq \calQ(f,f)+\|f\|^2_{L^2(\Sm^d)} \leq C^{-1} \|f\|^2_{H^\beta(\Sm^d)}, \qquad \forall f \in H^\beta(\Sm^d).
\ee
Also, $\calL$ can be extended to a bounded operator (still denoted by $\calL$ for simplicity) from $H^\beta(\Sm^d)$ to its dual  $(H^\beta(\Sm^d))^*$.
\item[(iii)] Let $(\cdot )_{-}$ denote the negative part of a function. Assume there exists $b \in (0,a_1)$ such that the kernel $F$ verifies
$$ (F(s)-a_1\vert 1-s\vert ^{-\beta-d/2})_{-} \leq b \vert 1-s\vert ^{-\beta-d/2}, \qquad a.e. \textrm{ on } (-1,1).
$$
Then, there exists a constant $C>0$ such that, for $e_0=\sigma (\Sm^d )^{-1/2}$,
\be \label{boundQ2}
C\|f-\langle f, e_0 \rangle \|^2_{H^\beta(\Sm^d)} \leq \calQ(f,f).
\ee
\item[(iv)] For any $f \in H^{\beta}(\Sm^d)$, there exists $h \in L^2(\Sm^d)$ such that
$$
(-\Delta_d)^{\beta/2} h= \calL f, \qquad \textrm{with} \qquad \| h\|_{L^2(\Sm^d)} \leq C \| f\|_{H^{\beta}(\Sm^d)}.
$$
\end{itemize}
\end{lemma}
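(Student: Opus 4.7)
The overall strategy is to exploit the fact that $\calL$ is rotation invariant (since $F(k\cdot p)$ depends only on the inner product) to diagonalize it in the basis of spherical harmonics via the Funk--Hecke formula. All four items reduce to controlling the eigenvalues $\{\mu_n\}_{n\geq 0}$ defined by $\calL Y_{n,m} = \mu_n Y_{n,m}$, and the central technical task is to establish the two-sided bound
\[
c\, \lambda_n^\beta \leq -\mu_n \leq C\, \lambda_n^\beta, \qquad n \geq 1,
\]
together with $\mu_0 = 0$ (since $\calL$ annihilates constants).

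For (i), I would use the decomposition (\ref{decompF}) to write $\calL = \calL_1 + \calL_2$. The operator $\calL_1 f$ is bounded in $L^\infty$ directly since $F_1(k\cdot p)(1-(k\cdot p)^2)^{(d-2)/2}$ is integrable on $(-1,1)$ by construction. For $\calL_2 f$, fix $k \in \Sm^d$, parametrize a neighborhood of $k$ on $\Sm^d$ by the tangent plane, and Taylor-expand $f(p) = f(k) + \nabla_{T_k \Sm^d} f(k) \cdot (p-k) + O(|p-k|^2)$ for smooth $f$. The linear term vanishes in principal value by the antipodal symmetry of the singular kernel $F_2(k\cdot p)$ around $k$. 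Since $|k-p|^2 = 2(1-k\cdot p)$, the singular factor $F_2$ behaves as $|k-p|^{-d-2\beta}$, the surface element as $|k-p|^{d-1}d|k-p|$, and the Taylor remainder contributes $|k-p|^2$, giving an integrand of order $|k-p|^{1-2\beta}$ at the singularity — integrable for $\beta\in(0,1)$. This shows $\calL_2 f \in L^\infty$ uniformly in $k$.

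For (ii), I would first verify, using Fubini on the symmetrized expression, that $\langle -\calL f, g\rangle = \calQ(f,g)$ for $f,g \in \calC^\infty(\Sm^d)$, so $\calQ$ is the quadratic form of a non-negative symmetric operator. Closability and Friedrichs extension yield $\bar\calL$. Using Parseval in spherical harmonics, $\calQ(f,f) = \sum_{n,m}(-\mu_n)|\hat f_{n,m}|^2$ where $\hat f_{n,m}=\langle f, Y_{n,m}\rangle$, so (\ref{boundQ}) is equivalent to $-\mu_n \asymp \lambda_n^\beta$ for $n\geq 1$. The Funk--Hecke formula gives, up to constants,
\[
-\mu_n = c_d \int_{-1}^1 F(s)\bigl(1-\widetilde P_n^{d}(s)\bigr)(1-s^2)^{(d-2)/2}\, ds,
\]
where $\widetilde P_n^d$ is the normalized Gegenbauer polynomial with $\widetilde P_n^d(1)=1$. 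Splitting $F=F_1+F_2$, the $F_1$ piece contributes $O(1)$, while for the $F_2$ piece I would use the uniform estimate $1-\widetilde P_n^d(s) \asymp \min(n^2(1-s),1)$, so the dominant contribution is
\[
\int_0^1 \min(n^2 t, 1)\, t^{-1-\beta}\, dt \asymp n^{2\beta} \asymp \lambda_n^\beta.
\]
This is the heart of the matter and the main technical obstacle; the lower bound in particular requires quantitative control of $1-\widetilde P_n^d(s)$ down to $s$ near $1$. The boundedness $\calL: H^\beta \to (H^\beta)^*$ then follows from $|\mu_n|\leq C\lambda_n^\beta$ by duality in the spherical harmonic expansion.

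For (iii), the hypothesis $(F - a_1|1-s|^{-\beta-d/2})_- \leq b\,|1-s|^{-\beta-d/2}$ with $b<a_1$ yields the pointwise lower bound $F(s) \geq (a_1-b)|1-s|^{-\beta-d/2}$ a.e. on $(-1,1)$, whence $\calQ(f,f) \geq (a_1-b)\,\calQ_{\text{sing}}(f,f)$, and the spectral computation of the previous paragraph gives $\calQ_{\text{sing}}(f,f) \geq c\sum_{n\geq 1,m}\lambda_n^\beta |\hat f_{n,m}|^2 = c\,\|f - \langle f,e_0\rangle\|_{H^\beta(\Sm^d)}^2$, which is (\ref{boundQ2}). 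For (iv), the upper bound $|\mu_n|\leq C\lambda_n^\beta$ obtained in (ii) allows us to define
\[
h := \sum_{n\geq 1}\sum_{m=1}^{M(d,n)} \frac{\mu_n}{\lambda_n^{\beta/2}}\, \hat f_{n,m}\, Y_{n,m},
\]
for which $\|h\|_{L^2(\Sm^d)}^2 \leq C^2 \sum_{n,m}\lambda_n^\beta |\hat f_{n,m}|^2 \leq C'\,\|f\|_{H^\beta(\Sm^d)}^2$ and $(-\Delta_d)^{\beta/2} h = \calL f$ by construction, completing the lemma.
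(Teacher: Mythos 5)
Your proposal is correct in overall architecture but takes a genuinely different route from the paper at the central step. Where you diagonalize $\calL$ itself via Funk--Hecke and estimate its eigenvalues $\mu_n$ directly through Gegenbauer asymptotics, the paper never computes the multipliers of $\calL$ for a general kernel $F$: it introduces the model hypersingular operator $\calR_\beta f(k)=\textrm{p.v.}\int_{\Sm^d}(f(k)-f(p))|k-p|^{-2\beta-d}d\sigma(p)$, quotes Samko \cite{samko2} for its \emph{closed-form} multipliers $R_n\sim n^{2\beta}$, and then transfers these bounds to $\calQ$ by quadratic-form comparisons: Cauchy--Schwarz for the upper bound in \fref{boundQ}, a pointwise comparison of $F_2$ against the model kernel with an $L^2$ remainder for the lower bound, and the pointwise bound $F\geq(a_1-b)|1-s|^{-\beta-d/2}$ (which you state correctly) for \fref{boundQ2}. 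Your item (i) is essentially the paper's argument (Taylor expansion, odd symmetry killing the tangential term, integrand of order $(1-s)^{-\beta}$), your reduction of (iii) to the pure singular kernel matches the paper exactly, and your multiplier construction of $h$ in (iv) is arguably cleaner than the paper's duality argument via $(-\Delta_d)^{-\beta/2}\calL$. What your route buys is self-containedness and an explicit spectral picture of $\calL$; what it costs is precisely the estimate you flag as ``the heart of the matter,'' which the paper sidesteps entirely by outsourcing the only hard multiplier computation to \cite{samko2}.

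One concrete correction is needed there: the uniform two-sided bound $1-\widetilde P_n^d(s)\asymp\min(n^2(1-s),1)$ is \emph{false} pointwise when $d=1$ (a case the lemma covers), since $\widetilde P_n^1(\cos\theta)=\cos(n\theta)$ equals $1$ at the interior points $\theta=2\pi j/n$, where $\min(n^2(1-s),1)=1$; for $d\geq2$ the lower bound does hold but requires uniform Hilb/Mehler--Heine asymptotics near $s=1$ together with the interior decay $\sup_{|s|\leq 1-\delta}|\widetilde P_n^d(s)|=O(n^{-(d-1)/2})$, which is true but not elementary. The fix is cheap: you only need the integrated lower bound $\int_0^1\min\bigl(1,1-\widetilde P_n^d(1-t)\bigr)\,t^{-1-\beta}dt\gtrsim n^{2\beta}$ in the averaged sense, and the oscillation is harmless upon integration (for $d=1$, $1-\cos(n\theta)\geq c$ on a fixed fraction of each period); alternatively one argues as the paper does, comparing with Samko's exactly diagonalized kernel. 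Two smaller points: to get a uniform constant in \fref{boundQ2} over all $n\geq1$ from an asymptotic estimate you should note explicitly that each multiplier of the model kernel is strictly positive for $n\geq1$ (strict positivity of the kernel plus non-constancy of $Y_{n,m}$; in the paper this is the remark that $R_0=0$ while $R_n>0$); and closability of $\calQ$ is most directly seen \emph{from} the equivalence \fref{boundQ} (the form domain $H^\beta(\Sm^d)$ is complete for the form norm), so that estimate should logically precede, not follow, the appeal to the Friedrichs/representation theorem.
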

\begin{proof} $(i)$ Let $f\in \calC^\infty(\Sm^d)$. We have, for all $k \in \Sm^d$,
$$
\calL f(k)=\lim_{\eta \to 0} \int_{|k-p|>\sqrt{2 \eta}} F(k \cdot p)(f(p)-f(k)) d \sigma(p).
$$ 
We  split $\calL$ into $\calL=\calL_1+\calL_2$ according to \fref{decompF}, and only treat $\calL_2$ since the term $\calL_1$ is straightforward. We may assume without loss of generality that $k=e_{d+1}=(0,\dots,0,1)\in \Rm^{d+1}$, and write, when $d \geq 2$,
\[
p = (\sqrt{1-s^2}u,s),
\]
with $s\in[-1,1]$, and $ u\in \Sm^{d-1}$. Then,
\be \label{decL}
\calL_2 f(k)=\lim_{\eta \to 0} \int_{\Sm^{d-1}} \int_{-1}^{1-\eta} F_2(s) (f(\sqrt{1-s^2}u+s k)-f(k)) (1-s^2)^\frac{d-2}{2}d\sigma(u) ds.
\ee
 Recasting $f$ as
$$
f(k)=\varphi(0,\cdots,0,1), \qquad f(\sqrt{1-s^2}u+s k)=\varphi(\sqrt{1-s^2}u_1, \cdots,\sqrt{1-s^2}u_{d},s),
$$
we have
\bee
f(\sqrt{1-s^2}u+s k)-f(k)&=&(s-1) \partial_{x_{d+1}} \varphi(0,\cdots,0,1)+ \sqrt{1-s^2} u \cdot \nabla_{d}\varphi(0,\cdots,0,1)\\
&&+\calO(|s-1|),
\eee
where $\partial_{x_{d+1}}$ denotes partial derivation with respect to the $(d+1)-$th variable and $\nabla_{d}$ the gradient with respect to the $d$ first variables. Since $\int_{\Sm^{d-1}} u d\sigma(u)=0$, it follows from the equation above and \fref{decompF} that
\be
|\calL_2 f(k)| \leq  C \int_{-1}^{1} \frac{(1-s)(1-s^2)^\frac{d-2}{2}}{(1-s)^{\beta+\frac{d}{2}}} ds \leq C \int_{-1}^{1} \frac{ds}{(1-s)^\beta} \leq C,
\ee
since $\beta \in (0,1)$. This proves item (i) for $d \geq 2$. The case $d =1$ follows analogously after a simple adaptation.\\

$(ii)$ Let $f$ and $g$ be in $\calC^\infty(\Sm^d)$. Then
\begin{equation}\label{relationLQ} \begin{split}
\langle \calL f, g\rangle&= \textrm{p.v.} \int_{\Sm^d}\int_{\Sm^d} F( k \cdot  p) \left(f( p)-f( k)\right) g(k) d\sigma(p) d\sigma(k)\\
&= \frac{1}{2}\int_{\Sm^d}\int_{\Sm^d} F( k \cdot  p) \left(f( p)-f( k)\right) \left(g(k)-g(p)\right) d\sigma(p) d\sigma(k)\\
&=-\calQ(f,g).
\end{split}
\end{equation}
Using \fref{decompF} and a Cauchy-Schwarz inequality, we find
$$
|\langle \calL_2 f, g\rangle|^2 \leq C \left( \int_{\Sm^d}\int_{\Sm^d} \frac{|f( k)-f( p)|^2}{|k- p|^{2\beta+d}} d\sigma( k) \sigma( p) \right)  \left( \int_{\Sm^d}\int_{\Sm^d} \frac{|g( k)-g( p)|^2}{|k-p|^{2\beta+d}} d\sigma( k) d\sigma( p) \right).
$$
Introducing the following operator, for $\beta \in(0,1)$,
$$\calR_\beta f(k)= \textrm{p.v.} \int_{\Sm^d}
\frac{f(k)-f(p)}{|k -p|^{2\beta+d}} d
\sigma(p),~~~k\in\Sm^d,
$$
 we have
$$
\int_{\Sm^d}\int_{\Sm^d} \frac{|f( k)-f( p)|^2}{|k- p |^{2\beta+d}} d\sigma( k) \sigma( p)=2 \langle \calR_\beta f,f \rangle.
$$
It is proved in \cite{samko2} (with a slight adaptation of the
constants), that the Fourier multipliers $R_n$ associated with $\calR_\beta$ are  
$$
R_n=\frac{2^{2\beta} \pi^\frac{d}{2} \Gamma(\beta)}{\Gamma(\frac{d}{2}+\beta)}\left(\frac{\Gamma(n+\frac{d+2\beta}{2})}{\Gamma(n+\frac{d-2\beta}{2})} -\frac{\Gamma(\frac{d+2\beta}{2})}{\Gamma(\frac{d-2\beta}{2})}\right),
$$
where $\Gamma$ is the gamma function. When $d=1$ and
$\beta={1}/{2}$, we have, by convention,~$\Gamma(\frac{d-2\beta}{2})=\Gamma(0)=\infty$. The fact that
$\Gamma(n+\alpha) \sim \Gamma(n) n^\alpha$ as $n \to \infty$ for
$\alpha \in \Rm$, shows that~$R_n$ behaves like $n^{2\beta}$ for large
$n$, which is the same asymptotics as the eigenvalues of
$(-\Delta_d)^{\beta}$. We can therefore write
$$
\langle \calR_\beta f,f \rangle \leq C \|f\|^2_{H^\beta(\Sm^d)},
$$
so that
\be \label{contL}
|\langle \calL f, g\rangle| \leq C  \|f\|_{H^\beta(\Sm^d)}  \|g\|_{H^\beta(\Sm^d)}.
\ee
By density of $\calC^\infty(\Sm^d)$ in $H^\beta(\Sm^d)$,  Theorem X.23 of \cite{RS-80-2} then shows that the quadratic form $\calQ$ can be extended to a quadratic form (still denoted by $\calQ$) with domain $\calD(\calQ)=H^\beta(\Sm^d)$, associated to a unique self-adjoint operator $\bar \calL$, with the domain given in (ii), such that
$$
\calQ(f,g)=-\langle \bar \calL f, g \rangle, \qquad \forall f\in \calD(\bar \calL), \quad \forall g\in H^\beta(\Sm^d).
$$ 
The upper bound in \fref{boundQ} follows from \fref{contL}. For the lower bound, we simply remark that there are positive constants $C_1$ and $C_2$ such that
$$
\langle \calR_\beta f,f \rangle \leq C_1 \langle \calL_2 f, f\rangle+C_2\|f\|^2_{L^2(\Sm^d)}
$$
and use the asymptotic behavior of $R_n$. Note that we can also conclude from \fref{contL} that $\calL$ can be extended to a bounded operator (still denoted by $\calL$) from $H^\beta(\Sm^d)$ to its dual  $(H^\beta(\Sm^d))^*$.\\

$(iii)$ For $f \in H^\beta(\Sm^d)$, we have
\bee
2 \calQ(f,f) 
&=& 
\int_{\Sm^d}\int_{\Sm^d} \left(\frac{a_1}{|k-p|^{2\beta+d}}+F(k \cdot p)-\frac{a_1}{\vert k-p \vert^{2\beta+d}}\right)(f(k)-f(p))^2d\sigma(k) \sigma( p)\\
&\geq& 
\int_{\Sm^d}\int_{\Sm^d} \left(\frac{a_1}{|k-p|^{2\beta+d}}-\left(F(k \cdot p)-\frac{a_1}{\vert k-p \vert^{2\beta+d}}\right)_{-}\right)(f(k)-f(p))^2d\sigma(k) \sigma( p)\\
&\geq& \int_{\Sm^d}\int_{\Sm^d} \frac{a_1-b}{|k -p|^{2\beta+d}}(f(k)-f(p))^2d\sigma(k) \sigma( p)\\
&\geq & 2(a_1-b)\langle \calR_\beta f,f \rangle.
\eee
The conclusion follows once more from the asymptotics of the multipliers $R_n$, and the fact that $R_0=0$.\\

$(iv)$ Let $f \in \calC^\infty(\Sm^d)$. Let us remark first that if $(-\Delta_d)^\gamma: H^{2 \gamma}(\Sm^d) \to L^2(\Sm^d)$, with $\gamma>0$, then
$$
\ker (-\Delta_d)^\gamma=\Cm.
$$
Hence, $(-\Delta_d)^{-\gamma}$, the inverse of $(-\Delta_d)^{\gamma}$, is defined on the set of functions $\varphi \in L^2(\Sm^d)$ with vanishing integral on the sphere, and $\int_{\Sm^d}(-\Delta_d)^{-\gamma} \varphi(k)  d\sigma(k)=0$. Then, since $\int_{\Sm^d} \calL f(k) d\sigma(k)=0$, we can write 
$$
\calL f= (-\Delta_d)^\frac{\beta}{2} (-\Delta_d)^{-\frac{\beta}{2}} \calL f.
$$
It is then direct to see that the operator $(-\Delta_d)^{-\frac{\beta}{2}} \calL$ can be extended to a bounded operator $A$ from $H^\beta(\Sm^d)$ to $L^2(\Sm^d)$. Denote indeed by $\langle g \rangle$ the integral of $g\in \calC^\infty(\Sm^d)$ on $\Sm^d$, so that
$$
\langle (-\Delta_d)^{-\frac{\beta}{2}} \calL f, g \rangle=\langle (-\Delta_d)^{-\frac{\beta}{2}} \calL f, g-\langle g \rangle \rangle=\langle \calL f, (-\Delta_d)^{-\frac{\beta}{2}} (g-\langle g \rangle)\rangle,
$$
we then deduce from \fref{contL} that
$$
|\langle \calL f, (-\Delta_d)^{-\frac{\beta}{2}} (g-\langle g \rangle) \rangle| \leq C  \|f\|_{H^\beta(\Sm^d)}  \|(-\Delta_d)^{-\frac{\beta}{2}} (g-\langle g \rangle)\|_{H^\beta(\Sm^d)} \leq C \|f\|_{H^\beta(\Sm^d)} \|g\|_{L^2(\Sm^d)}.
$$
It finally suffices to set $h=A f$ to conclude the proof.
\end{proof}
\subsection{Step 1: standard estimates for the Cauchy problem}
\begin{proposition} \label{existweak} The radiative transfer equation \fref{transportL} admits a unique weak solution $f$ that satisfies $(-\Delta_d)^\frac{\beta}{2} f \in L^2((0,\infty) \times \Rm^{d+1} \times \Sm^d)$ and the following estimate: for any $T>0$,
\be \label{est_rte1}
\| f\|^2_{L^\infty((0,T),L^2(\Rm^{d+1} \times \Sm^d))}+2\int_0^T \calQ(f,f) dt \leq \|f_0\|^2_{L^2(\Rm^{d+1} \times \Sm^d)},
\ee
where the quadratic form $\calQ$ is defined in Lemma \ref{lemL}. Moreover, if for any $\calC^\infty$ function $\chi$ with compact support in $(0,\infty)$, we have $\partial_{t,x}^{\gamma} {f_{\chi}} \in L^2(\Rm_t \times \Rm_x^{d+1} \times \Sm^d)$,
where $\gamma>0$ and $f_\chi$ is extension by zero of $\chi f$ to all $t$ in $\Rm$, then
\be \label{estimDD}
(-\Delta_d)^\frac{\beta}{2} \partial_{t,x}^{\gamma} {f_{\chi}} \in L^2(\Rm_t \times \Rm_x^{d+1} \times \Sm^d).
\ee
\end{proposition}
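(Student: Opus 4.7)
The plan is to establish existence, uniqueness, and the basic energy bound \fref{est_rte1} by a standard truncation and compactness argument, and then to obtain the improved fractional regularity \fref{estimDD} via a Fourier-space energy estimate that exploits the coercivity of $\calQ$ provided by Lemma \ref{lemL}(ii).

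First I would truncate the kernel by setting $F_n(s) = \min(F(s), n) \in L^\infty((-1,1))$. The associated operator $\calL_n$ is bounded on $L^2(\Sm^d)$, so the regularized equation $\partial_t f_n + k\cdot\nabla_x f_n = \calL_n f_n$ with initial datum $f^0 \in L^2$ admits a unique solution $f_n \in \calC([0,\infty), L^2(\Rm^{d+1}\times\Sm^d))$ by $C_0$-semigroup theory (bounded symmetric perturbation of the skew-adjoint transport generator). Multiplying by $f_n$, integrating, and using the analogue of \fref{relationLQ} for the bounded kernel $F_n$, I obtain the energy identity
$$
\|f_n(T)\|_{L^2(\Rm^{d+1}\times\Sm^d)}^2 + 2\int_0^T\!\!\int_{\Rm^{d+1}} \calQ_n(f_n, f_n)\,dx\,dt = \|f^0\|_{L^2(\Rm^{d+1}\times\Sm^d)}^2.
$$
The uniform $L^\infty_t L^2_{x,k}$ bound yields a weakly convergent subsequence $f_n \rightharpoonup f$. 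For any test $\varphi \in \calC^\infty_c$, the argument of Lemma \ref{lemL}(i) applied to $F_n$ bounds $\|\calL_n\varphi\|_{L^\infty(\Sm^d)}$ uniformly in $n$ (the proof uses only $F_n \leq F \leq C(1-s)^{-\beta-d/2}$ near $s=1$), and pointwise convergence $\calL_n\varphi \to \calL\varphi$ with dominated convergence then allows passing to the limit in the weak formulation. Weak lower semicontinuity of the $L^2$ norm together with Fatou's lemma applied through $F_n \uparrow F$ give \fref{est_rte1} for the limit $f$, and the lower bound in \fref{boundQ} upgrades this control to $(-\Delta_d)^{\beta/2} f \in L^2((0,\infty)\times\Rm^{d+1}\times\Sm^d)$. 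Uniqueness follows from the energy identity applied to the difference of two weak solutions with vanishing initial data.

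For the improved regularity \fref{estimDD}, I would introduce a secondary cutoff $\tilde\chi \in \calC^\infty_c(0,\infty)$ with $\tilde\chi \equiv 1$ on $\mathrm{supp}\,\chi$. Extended by zero to $\Rm_t$, the function $f_\chi$ satisfies distributionally
$$
\partial_t f_\chi + k\cdot\nabla_x f_\chi - \calL f_\chi = \chi'(t)\,f = \chi'(t)\,f_{\tilde\chi}
$$
on $\Rm_t \times \Rm^{d+1}_x \times \Sm^d$. Taking the Fourier transform in $(t,x)$, denoting by $\eta$ the variable dual to $\partial_{t,x}^\gamma$ (so $\eta = \tau$ or $\eta = \xi_j$), multiplying by $|\eta|^{2\gamma}\overline{\hat f_\chi}$, integrating in $(\tau,\xi,k)$, and taking real parts: the transport contribution $i(\tau + k\cdot\xi)|\eta|^{2\gamma}|\hat f_\chi|^2$ is purely imaginary and drops, while the collision term produces $\int |\eta|^{2\gamma}\,\calQ(\hat f_\chi, \hat f_\chi)\,d\tau\,d\xi$ (with $\calQ$ extended to complex arguments by sesquilinearity). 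By the lower bound in \fref{boundQ} and Plancherel, this quantity dominates $c\,\|(-\Delta_d)^{\beta/2}\partial_{t,x}^\gamma f_\chi\|_{L^2}^2$ up to the already-finite term $C\,\|\partial_{t,x}^\gamma f_\chi\|_{L^2}^2$. The source term on the right-hand side is controlled by Cauchy-Schwarz by $\|\partial_{t,x}^\gamma f_\chi\|_{L^2}\,\|\partial_{t,x}^\gamma(\chi' f_{\tilde\chi})\|_{L^2}$; since $\chi' \in \calC^\infty_c(\Rm)$ and $\partial_{t,x}^\gamma f_{\tilde\chi} \in L^2$ by the hypothesis applied to $\tilde\chi$, the standard product estimate in fractional Sobolev spaces makes the second factor finite.

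The main obstacle I expect is the careful justification of the Fourier-space energy identity---in particular the extension of $\calQ$ to complex arguments and the vanishing of the real part of the transport contribution in the pure $L^2$ framework---together with the uniform control of the approximating kernels $F_n$ needed to pass to the limit through the singular operator $\calL$; both points rely on the quantitative estimates of Lemma \ref{lemL}.
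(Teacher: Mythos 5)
Your existence argument and your Fourier-space derivation of \fref{estimDD} are legitimate variants of the paper's proof. The monotone truncation $F_n=\min(F,n)$ replaces the paper's shifted-kernel regularization $F_{2,n}(s)=a_1(1+a_{2,n}(s))(1-\chi(s))\vert 1-s+n^{-1}\vert^{-\beta-d/2}$, and the monotonicity $F_n\uparrow F$ indeed simplifies the limit identification (Fatou plus weak lower semicontinuity of $\calQ_m$ avoids the paper's strong-$L^2$ convergence argument for $\sqrt{F_{2,n}(k\cdot p)}\,(\varphi(k)-\varphi(p))$ against tensorized test functions). For \fref{estimDD} the paper instead applies $\partial^\gamma_{t,x}$ to the regularized equation, redoes the energy estimate \fref{estimreg2}, and passes to the limit; your Fourier route can be made to work, but not exactly as written: the global integral of the transport contribution $i(\tau+k\cdot\xi)\vert\eta\vert^{2\gamma}\vert\hat f_\chi\vert^2$ is not absolutely convergent a priori (that would amount to extra regularity), so you cannot integrate first and then take real parts. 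The fix is to work pointwise: for a.e.\ $(\tau,\xi)$ the transformed equation holds in $(H^\beta(\Sm^d))^*$, the pairing with $\hat f_\chi(\tau,\xi,\cdot)\in H^\beta(\Sm^d)$ (known from the first part via Plancherel) is legitimate, and the transport term is then genuinely purely imaginary; only afterwards multiply by $\vert\eta\vert^{2\gamma}$ and integrate. Two smaller remarks: since $\chi'$ is itself $\calC^\infty$ with compact support in $(0,\infty)$, the hypothesis applies to it directly, making your detour through $\tilde\chi$ and fractional product estimates unnecessary; and your invocation of the lower bound in \fref{boundQ} (rather than \fref{boundQ2}) is the right one, as the $L^2$ term it produces is finite by hypothesis.

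The genuine gap is uniqueness. You assert that it ``follows from the energy identity applied to the difference of two weak solutions with vanishing initial data,'' but in the paper's solution class a weak solution is only an $L^\infty((0,\infty),L^2)$ function tested against smooth $\varphi$: it carries no a priori $H^\beta(\Sm^d)$ regularity in $k$ (the bound $\int\calQ(f,f)\,dt<\infty$ is derived only for the constructed limit, not for an arbitrary weak solution), so $\langle\calL f,f\rangle$ need not even be defined for the difference of two solutions. Even granting that regularity, deriving $\frac{d}{dt}\Vert f(t)\Vert^2_{L^2}=2\langle\calL f,f\rangle$ requires inserting $f$ itself as a test function, which is inadmissible; mollifying in $k$ to make it admissible produces the commutator of the mollifier with $k\cdot\nabla_x$, which involves the uncontrolled quantity $\nabla_x f$, so a renormalization (DiPerna--Lions) or duality argument is unavoidable. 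This is exactly why the paper proves the proposition for the inhomogeneous equation \fref{rteproof}, solves the backward problem with source $-f(T-t,x,-k)$, reverses time and momentum to obtain a weak solution $u$ of the adjoint problem, regularizes it by mollification in $t$, $x$, and a zonal mollifier $\psi_n(k\cdot p)$ in $k$ (chosen so that the adjoint operator can be transferred onto the mollifying kernel), and inserts $u_n$ into the weak formulation \fref{altweak} to conclude $\int f^2=0$. Without this duality construction, or an equivalent argument showing that every weak solution in the stated class obeys the energy inequality, your proof establishes existence of a solution satisfying \fref{est_rte1} but not its uniqueness.
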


\begin{proof}
We consider a slightly more general version of \fref{transportL} that will be helpful for the adjoint problem and for \fref{estimDD}: we suppose there is a source term in the transport equation so that $f$ satisfies
\be \label{rteproof}
\partial_t f+  k \cdot \nabla_x f=\calL f+S, \qquad S \in L^2((0, \infty) \times \Rm^{d+1} \times \Sm^d).
\ee
We then proceed by regularization: let $f^0_n \in \calC^\infty(\Rm^{d+1} \times \Sm^d)$
 with compact support in $x$, $S_n \in \calC^\infty([0,\infty)\times \Rm^{d+1} \times \Sm^d)$  with compact support in $(t,x)$ such that, as $n \to \infty$,
\begin{align*}
&f^0_n \to f^0, \qquad \textrm{strongly in} \qquad L^2(\Rm^{d+1} \times \Sm^d)\\
&S_n \to S, \qquad \textrm{strongly in} \qquad L^2((0,\infty)\times \Rm^{d+1} \times \Sm^d).
\end{align*}
The regularization of $F$ is done as follows: consider decomposition \fref{decompF}. The functions $F_1$ and $a_2$ are regularized into $\calC^\infty([-1,1])$ functions with usual mollifiers as $F_{1,n}$ and $a_{2,n}$ with
\begin{align}
&F_{1,n} \to F_1, \qquad \textrm{strongly in} \qquad L^1((-1,1))\\
&a_{2,n} \to a_2, \qquad \textrm{a.e. on} \qquad (\delta,1). \label{lima}
\end{align}
The function $F_2$ is regularized as
$$
F_{2,n}(s)=\frac{a_1(1+a_{2,n}(s))(1-\chi(s))}{|1-s+n^{-1}|^{\beta+\frac{d}{2}}},
$$
with $1+a_{2,n}(s) \geq 0$ in $[\delta,1]$ since $a_{2,n}$ is  such that $\|a_{2,n}\|_{L^\infty((\delta,1))}\leq \|a_{2}\|_{L^\infty((\delta,1))}<1$. It is then direct to obtain the existence of a smooth solution $f_n \in \calC^\infty([0, \infty) \times \Rm^{d+1} \times \Sm^d)$ with compact support in $x$ (using for instance Duhamel expansions and the fact that the collision kernel is bounded). Multiplying the regularized transport equation by $f_n$ and integrating in all variables leads to, for any $T \in (0,\infty)$,
\begin{align} \label{estimreg}
&\| f_n\|^2_{L^\infty((0,T),L^2(\Rm^{d+1} \times \Sm^d))}+\sum_{i=1}^2\| h_{i,n}\|^2_{L^2((0,T) \times \Rm^{d+1} \times \Sm^d \times \Sm^d)}\\
& \hspace{2cm}\leq \|f_n^0\|^2_{L^2(\Rm^{d+1} \times \Sm^d)}+2\|S_n\|_{L^2((0,T) \times \Rm^{d+1} \times \Sm^d)}\|f_n\|_{L^2((0,T) \times \Rm^{d+1} \times \Sm^d)}\nonumber
\end{align}
where
\be \label{defh}
h_{i,n}(t,x,k,p)=\sqrt{F_{i,n}(k \cdot p)} (f_n(t,x,k)-f_n(t,x,p)).
\ee
Since by construction,
$$ \|f_n^0\|_{L^2(\Rm^{d+1} \times \Sm^d)} \leq  \|f^0\|_{L^2(\Rm^{d+1} \times \Sm^d)}, \qquad \|S_n\|_{L^2((0,\infty) \times \Rm^{d+1} \times \Sm^d)} \leq  \|S\|_{L^2((0,\infty)\times \Rm^{d+1} \times \Sm^d)},$$
 we can deduce uniform bounds from \fref{estimreg}, and standard compactness arguments show that one can extract subsequences such that, for $i=1,2$:
\begin{align} \label{lim1}
&f_n \to f \qquad \textrm{weak-$\star$ in} \qquad L^\infty((0,\infty),L^2(\Rm^{d+1} \times \Sm^d))\\  \label{lim2}
&h_{i,n} \to h_i \qquad \textrm{weakly in} \qquad L^2((0,\infty)\times \Rm^{d+1} \times \Sm^d\times \Sm^d).
\end{align}
It remains now to identify the limits. We focus only on the most interesting terms $f$ and $h_2$ and leave the details for $h_1$ to the reader. We start with $h_{2,n}$. Let $\varphi \in L^2((0,\infty)\times \Rm^{d+1} \times \Sm^d\times \Sm^d)$ and denote by $(\cdot, \cdot)$ the corresponding inner product. Functions of the form $\varphi_1(t) \otimes \varphi_2(x) \otimes \varphi_3(k) \otimes \varphi_4(p)$, with $\varphi_i$ infinitely differentiable with compact support, are dense in $L^2((0,\infty)\times \Rm^{d+1} \times \Sm^d\times \Sm^d)$, see \cite{treves}, Chapter 39, so it is enough to consider a test function $\varphi$ of the latter tensor form. We then have
$$
(h_{2,n},\varphi)=\int_0^T\int_{\Rm^{d+1}} \int_{\Sm^d} \int_{\Sm^d} f_{n}(t,x,k)\varphi_1(t)  \varphi_2(x)  \psi_n(k,p) dt dx d\sigma(k)d\sigma(p),
$$
where
$$
\psi_n(k,p)=\sqrt{F_{2,n}(k \cdot p)} (\varphi_3(k)\varphi_4(p)-\varphi_3(p)\varphi_4(k)).
$$
Owing the weak convergence \fref{lim1}, we need to show the strong convergence of $\psi_n$ in $L^2$ in order to identify the limit. Since $\psi_n$ can be written as
$$
\psi_n(k,p)=\sqrt{F_{2,n}(k \cdot p)} \left(\varphi_3(k)\varphi_4(k)-\varphi_3(p)\varphi_4(p)-(\varphi_3(k)+\varphi_4(p))(\varphi_4(k)-\varphi_4(p))\right),
$$
it is enough to show the strong convergence in $L^2(\Sm^d \times \Sm^d )$ of functions of the form
$$
\widetilde{\psi_n}(k,p)=\sqrt{F_{2,n}(k \cdot p)} (\varphi(k)-\varphi(p))
$$ 
where $\varphi \in \calC^\infty(\Sm^d)$. For this, we remark first that by construction, using \fref{lima}, as $n \to \infty$,
\be \label{point}
{F_{2,n}(s)} \to {F_{2}(s)} \quad \textrm{a.e. on} \quad (\delta,1). 
\ee
Introduce then
$$
I_n:=\int_{\Sm^d}\int_{\Sm^d} \left(\sqrt{F_{2,n}(k \cdot p)}-\sqrt{F_{2}(k \cdot p)}\right)^2(\varphi(k)-\varphi(p))^2 d\sigma(k) d\sigma(p).
$$
Since
$$
|\varphi(k)-\varphi(p)| \leq C |k-p|,
$$
and $F_{2,n}$, $F_{2}$ are supported on $[\delta,1]$, we have
$$
I_n \leq C \int_{\delta}^1 \left(\frac{\sqrt{1+a_{2}(s)}}{|1-s|^{\frac{\beta}{2}+\frac{d}{4}}}-\frac{\sqrt{1+a_{2,n}(s)}}{|1-s+n^{-1}|^{\frac{\beta}{2}+\frac{d}{4}}}\right)^2 (1-s) (1-s^2)^\frac{d-2}{2} ds.
$$
The integrand being uniformly bounded by the function $C(1-t)^{-\beta} \in L^1((-1,1))$ since $\beta \in (0,1)$, \fref{point} together with dominated convergence show that $I_n \to 0$. This implies that
$$
(h_{2,n},\varphi) \to \int_0^T\int_{\Rm^{d+1}} \int_{\Sm^d} \int_{\Sm^d} f(t,x,k)\varphi_1(t)  \varphi_2(x) \psi(k,p) dt dx d\sigma(k)d\sigma(p),
$$
with obvious notation for $\psi$, which yields
$$
h_{2}(t,x,k,p)=\sqrt{F_{2}(k \cdot p)} (f(t,x,k)-f(t,x,p)).
$$
>From \fref{estimreg}, \fref{lim1} and \fref{lim2}, we then deduce \fref{est_rte1} when $S=0$. It only remains to pass to the limit in the weak formulation. The only term requiring some attention is the one involving the collision kernel, for which we need to show that terms of the form
$$
\textrm{p.v.} \int_{\Sm^d} (F_{1,n}+F_{2,n}-F_{1}-F_{2})( k \cdot  p) \left(\varphi( p)-\varphi( k)\right)d\sigma(p)
$$
converge to zero strongly in $L^2(\Sm^d)$ for $\varphi$ smooth. This is done using \fref{point} and the decomposition \fref{decL} in order to get a majorizing function for dominated convergence. Summarizing, we have therefore obtained the existence of a weak solution $f$ satisfying estimate \fref{est_rte1}. The latter, together with \fref{boundQ2}, yields $(-\Delta_d)^\frac{\beta}{2} f \in L^2((0,\infty) \times \Rm^{d+1} \times \Sm^d)$.

Let us prove now the uniqueness. We proceed as usual with the adjoint problem, and need to show that for $f$ a weak solution with a vanishing initial condition, the following equality, for all smooth $\varphi$ with compact support in $x$ such that $\varphi=0$ for $t \geq T$, $T$ arbitrary,
\begin{align*}
&\int_0^T \int_{\Rm^{d+1}} \int_{\Sm^d} f(t,x, k) (\partial_t+  k \cdot \nabla_x+\calL ) \varphi(t,x, k) dt dx d\sigma( k)=0,
\end{align*}
implies $f=0$. Note that for technical reasons, the latter condition can be recast as 
\begin{align} \label{altweak}
&\lim_{n \to \infty}\int_{n^{-1}}^T \int_{\Rm^{d+1}} \int_{\Sm^d} f(t,x, k) (\partial_t+  k \cdot \nabla_x+\calL ) \varphi(t,x, k) dt dx d\sigma( k)=0.
\end{align}
Let $v \in L^\infty((0,T), L^2(\Rm^{d+1} \times \Sm^d))$ be a weak solution to the problem
$$
(\partial_t+  k \cdot \nabla_x-\calL) v(t,x,k)=-f(T-t,x,-k), \qquad v(0,x,k)=0,
$$
and define $u(t,x,k)=v(T-t,x,-k)$. We then verify that $u$ is a weak solution to the adjoint problem
$$
(\partial_t+  k \cdot \nabla_x+\calL) u(t,x,k)=f(t,x,k), \qquad u(T,x,k)=0,
$$
that is, for all $\varphi$ smooth with $\varphi(0,x,k)=0$,
\begin{align} \label{adjoint}
&\int_0^T \int_{\Rm^{d+1}} \int_{\Sm^d} u(t,x, k) (-\partial_t-k \cdot \nabla_x+\calL ) \varphi(t,x, k) dt dx d\sigma( k)\\
&\hspace{1cm}=\int_0^T \int_{\Rm^{d+1}} \int_{\Sm^d} f(t,x, k) \varphi(t,x, k) dt dx d\sigma( k). \nonumber
\end{align}
If $u$ were smooth, we would plug $\varphi=u$ in \fref{altweak}, which would lead to $f=0$. We only know that $u \in L^\infty((0,T), L^2(\Rm^{d+1} \times \Sm^d))$ and thus need to regularize. Let $u_n$ be a regularized version of $u$ defined by, for $t\in [n^{-1},T]$,
$$
u_n(t,x,k)=\int_0^T \int_{\Rm^{d+1}} \int_{\Sm^d}\varphi_n(t-s) \phi_n(x-y) \psi_n(k \cdot p) u(s,y,p) ds dx d\sigma(p),
$$
where $\varphi_n$ is such that $\varphi_n(u)=0$ for $|u|\geq n^{-1}$. We have consequently,
\begin{align*}
&(\partial_t+  k \cdot \nabla_x+\calL ) u_n(t,x,k)\\
& \hspace{1cm} =\int_0^T \int_{\Rm^{d+1}} \int_{\Sm^d}  u(s,y,p) (-\partial_s-k \cdot \nabla_y+\calL )\varphi_n(t-s) \phi_n(x-y) \psi_n(k \cdot p) ds dx d\sigma(p).
\end{align*}
Since $\varphi_n(t)=0$ for $t \in [n^{-1},T]$, we can use the fact that $u$ is a weak solution, and therefore \fref{adjoint}, to write, for all $t \in [n^{-1},T]$,
$$
(\partial_t+  k \cdot \nabla_x+\calL ) u_n(t,x,k)=f_n(t,x,k),
$$
where
$$
f_n(t,x,k)=\int_0^T \int_{\Rm^{d+1}} \int_{\Sm^d}  f(s,y,p) \varphi_n(t-s) \phi_n(x-y) \psi_n(k \cdot p) ds dx d\sigma(p).
$$
Assuming the regularization is such that
$$
f_n \to f, \qquad \textrm{strongly in} \qquad L^2((0,T)\times \Rm^{d+1} \times \Sm^d),
$$
it is then straightforward to conclude from \fref{altweak} that
\begin{align*} 
&\lim_{n \to \infty}\int_{n^{-1}}^T \int_{\Rm^{d+1}} \int_{\Sm^d} f(t,x, k) (\partial_t+  k \cdot \nabla_x+\calL ) u_n(t,x, k) dt dx d\sigma( k)\\
&=\lim_{n \to \infty}\int_{n^{-1}}^T \int_{\Rm^{d+1}} \int_{\Sm^d} f(t,x, k) f_n(t,x, k) dt dx d\sigma( k)\\
&=\int_0^T \int_{\Rm^{d+1}} \int_{\Sm^d} (f(t,x, k))^2 dt dx d\sigma( k)=0,
\end{align*}
which proves the uniqueness.
 
Regarding \fref{estimDD}, suppose $\partial_{t,x}^{\gamma} {f_{\chi}} \in L^2(\Rm_t \times \Rm_x^{d+1} \times \Sm^d)$,  and consider the regularized $f_n$ obtained previously. We remark that $u_n=\partial^\gamma_{t,x} f_{n,\chi}$ ($f_{n,\chi}$ the extension by zero of $\chi f_n$ to all $t \in \Rm$) satisfies \fref{rteproof} on $\Rm_t \times \Rm^{d+1}_x \times \Sm^d$ with $f=u_n$ and $S=S_n=-\partial^\gamma_{t,x} f_{n,\chi'}$. Note also that $\|u_n(t)\|_{L^2(\Rm^{d+1}\times \Sm^d)} \to 0$ as $|t| \to \infty$. Multiplying then \fref{rteproof} by $u_n$ and integrating over all variables leads to
\begin{align} \label{estimreg2}
&\frac{1}{2}\sum_{i=1}^2\| h_{i,n}\|^2_{L^2(\Rm^ \times \Rm^{d+1} \times \Sm^d \times \Sm^d)} \leq \|\partial^\gamma_{t,x} f_{n,\chi'}\|_{L^2(\Rm \times \Rm^{d+1} \times \Sm^d)}\|u_n\|_{L^2( \Rm \times \Rm^{d+1} \times \Sm^d)}
\end{align}
where $h_{i,n}$ is as in \fref{defh} with $f_n$ replaced by $u_n$. We showed above that $f_n$ converges weakly to the unique weak solution $f$. It is then not hard to see that the fact that $\partial_{t,x}^{\gamma} {f_{\chi}}$ belongs to $L^2(\Rm \times \Rm^{d+1} \times \Sm^d)$ for any smooth $\chi$ with compact support in $(0,\infty)$ implies that $S_n$ and $u_n$ are uniformly bounded in $L^2(\Rm \times \Rm^{d+1} \times \Sm^d)$. Following along the same lines as in the proof of \fref{est_rte1}, we can finally pass to the limit in \fref{estimreg2} and obtain that
$$
\int_{\Rm} \calQ(u,u)dt \leq  \|\partial_{t,x}^{\gamma} {f_{\chi'}}\|_{L^2(\Rm \times \Rm^{d+1} \times \Sm^d)}\|u\|_{L^2( \Rm \times \Rm^{d+1} \times \Sm^d)}, \qquad u=\partial_{t,x}^{\gamma} {f_{\chi}},
$$
which yields \fref{estimDD} thanks to \fref{boundQ2}.
\end{proof}

\subsection{Step 2: Regularity in $(t,x)$}

\begin{proposition} \label{regx} Let $f$ be the weak solution to \fref{transportL} and let $\gamma=\frac{\beta}{2+3 \beta}$. Then, for any $n \geq 0$, any $m \geq 0$,  and any $\calC^\infty$ function $\chi$ with compact support in $(0,\infty)$, we have
$$
\partial_{t}^{n \gamma}\partial_{x_j}^{m \gamma} {f_{\chi}} \in L^2(\Rm \times \Rm^{d+1},  H^\beta(\Sm^d)), \qquad j=1,\cdots,d+1,
$$
where $f_{\chi}$ is the extension by zero of the function $\chi f$ to all $t \in \Rm$.
\end{proposition}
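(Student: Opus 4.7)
The plan is to proceed by induction on $N = n + m$, iteratively applying Theorem \ref{hypo} to gain $\gamma$-regularity in $(t,x)$, and then the estimate \fref{estimDD} from Proposition \ref{existweak} to upgrade it to $H^\beta$-regularity on the sphere. The value $\gamma = \beta/(2+3\beta)$ emerges from Theorem \ref{hypo} with an effective source exponent equal to $\beta/2$---since Lemma \ref{lemL}(iv) represents $\calL f$ as $(-\Delta_d)^{\beta/2}h$ with $h \in L^2$---and with $\theta = \beta$: substituting gives $\theta/(2(1+2\cdot\beta/2)+\theta) = \beta/(2+3\beta)$.

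For the base case $N = 0$, the property $f_\chi \in L^2(\Rm \times \Rm^{d+1}, H^\beta(\Sm^d))$ follows directly from \fref{est_rte1} combined with the spectral-gap lower bound \fref{boundQ2}.

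For the inductive step, I would fix $n,m$ with $n + m = N \geq 1$, assume without loss of generality that $n \geq 1$ (the case $n = 0$, $m \geq 1$ being symmetric), and set $g := \partial_t^{(n-1)\gamma}\partial_{x_j}^{m\gamma} f_\chi$. First I would verify that $g$ solves, in the distribution sense on $\Rm \times \Rm^{d+1} \times \Sm^d$,
\[
\partial_t g + k \cdot \nabla_x g = \calL g + \partial_t^{(n-1)\gamma}\partial_{x_j}^{m\gamma} f_{\chi'},
\]
obtained by applying the Fourier multiplier $\partial_t^{(n-1)\gamma}\partial_{x_j}^{m\gamma}$ to the identity $\partial_t f_\chi + k \cdot \nabla_x f_\chi = \calL f_\chi + f_{\chi'}$ satisfied by $f_\chi$, using that this multiplier commutes with the transport operator and with $\calL$ (which acts only on $k$). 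By Lemma \ref{lemL}(iv), one may write $\calL g = (-\Delta_d)^{\beta/2} h_g$ with $\|h_g\|_{L^2} \leq C\|g\|_{L^2(\Rm \times \Rm^{d+1}, H^\beta(\Sm^d))}$, the latter being finite by the induction hypothesis. The remaining source $\partial_t^{(n-1)\gamma}\partial_{x_j}^{m\gamma} f_{\chi'}$ lies in $L^2$, again by the induction hypothesis, but this time applied to the cutoff $\chi' \in \calC^\infty_c((0,\infty))$. Theorem \ref{hypo} with $\theta = \beta$ then yields $\partial_{t,x}^\gamma g \in L^2$; in particular $\partial_t^{n\gamma}\partial_{x_j}^{m\gamma} f_\chi = \partial_t^\gamma g \in L^2$, and \fref{estimDD} promotes this to $L^2(\Rm \times \Rm^{d+1}, H^\beta(\Sm^d))$, closing the induction.

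The main subtlety is that the induction statement must be formulated uniformly over every $\chi \in \calC^\infty_c((0,\infty))$: the equation for $g$ involves the source $f_{\chi'}$ with a different cutoff, and only this uniformity allows the induction hypothesis to apply to it. A secondary technical point is the distributional justification of the commutation of the $(t,x)$-Fourier multipliers with both $\partial_t + k \cdot \nabla_x$ and $\calL$; this is legitimate since $\calL$ acts only in $k$ while the multipliers act only in $(t,x)$, provided $f_\chi$ is tempered in $(t,x)$, which holds because it is $L^2$ and compactly supported in $t$.
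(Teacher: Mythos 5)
Your proposal is correct and follows essentially the same route as the paper's proof: the base case comes from Proposition \ref{existweak}, and the inductive step uses Lemma \ref{lemL}(iv) to write $\calL g=(-\Delta_d)^{\beta/2}h$, Theorem \ref{hypo} with $\theta=\beta$ and source exponent $\beta/2$ to gain $\gamma=\beta/(2+3\beta)$ in $(t,x)$, and \fref{estimDD} to recover the $H^\beta(\Sm^d)$ regularity --- your single induction on $n+m$ versus the paper's double induction on $(n,m)$ is an immaterial difference. You also correctly isolate the key technical point, namely that the induction statement must hold uniformly over all cutoffs $\chi$ so that it can be applied to the source term involving $f_{\chi'}$.
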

\begin{proof} We know from Proposition \ref{existweak} that the result is true for $n=m=0$, and prove the proposition by a double induction on $n$ and $m$. Let us show first that $\partial_{t}^{n \gamma} {f_{\chi}} \in L^2(\Rm \times \Rm^{d+1} \times \Sm^d)$.  By construction, the function $f_{\chi} \in L^2(\Rm \times \Rm^{d+1} \times \Sm^d)$ verifies in the distribution sense
$$
\partial_t f_{\chi}+  k \cdot \nabla_x f_{\chi}=\calL f_{\chi}+S, \qquad S=- f_{\chi'}.
$$
We start with the case $n=1$. We know from Proposition \ref{existweak} that $(-\Delta_d)^\frac{\beta}{2} f \in L^2((0,\infty) \times \Rm^{d+1} \times \Sm^d)$, so that $(-\Delta_d)^\frac{\beta}{2} f_{\chi} \in L^2(\Rm \times \Rm^{d+1} \times \Sm^d)$. We have as well $S \in L^2(\Rm \times \Rm^{d+1} \times \Sm^d)$. In order to apply Theorem \ref{hypo} and the hypoelliptic estimates, we use item $(iv)$ of Proposition \ref{lemL} to write $\calL f_{\chi}=(-\Delta_d)^\frac{\beta}{2} h $, for some $h \in L^2(\Rm \times \Rm^{d+1} \times \Sm^d)$. It then follows from Theorem \ref{hypo} (with $\theta \equiv \beta$ and $\beta \equiv \beta/2$) that 
$
\partial_{t}^{\gamma} f_{\chi} \in L^2(\Rm \times \Rm^{d+1} \times \Sm^d),
$
which proves the case $n=1$.

Suppose now the result holds true at the step $n$, $n \geq 1$, so that for any $\chi$ with the properties stated in the proposition, we have 
$
u:=\partial_{t}^{n \gamma} f_{\chi} \in L^2(\Rm \times \Rm^{d+1} \times \Sm^d).
$
Result \fref{estimDD} of Proposition \ref{existweak} then yields  
$$(-\Delta_d)^\frac{\beta}{2} u \in L^2(\Rm \times \Rm^{d+1} \times \Sm^d).$$ 
In order to conclude, we remark that the function $u$ satisfies in the distribution sense
$$
\partial_t u+  k \cdot \nabla_x u=\calL u+S, \qquad S=- \partial_{t,x}^{n \gamma}f_{\chi'}.
$$
Since $\chi' \in \calC^\infty$ with compact support in $(0,\infty)$, we have by the induction hypothesis that $S \in L^2(\Rm \times \Rm^{d+1} \times \Sm^d)$. Applying once more Theorem \ref{hypo}, we find
$
\partial_{t}^{\gamma} u \in L^2(\Rm \times \Rm^{d+1} \times \Sm^d),
$
which proves the result for the step $n+1$ and therefore for all $n$. Following along the same lines shows that $\partial_{x_j}^{m \gamma} {f_{\chi}} \in L^2(\Rm \times \Rm^{d+1} \times \Sm^d)$ for all $m$, which completes the initialization step of the double induction.

The evolution step from $(n,m)$ to  $(n+1,m)$, to $(n,m+1)$ and to $(n+1,m+1)$ is done in a similar fashion as above, by defining first $u$ by $u:=\partial_{t}^{n \gamma} \partial_{x_j}^{m \gamma} f_{\chi} \in L^2(\Rm \times \Rm^{d+1} \times \Sm^d)$ and then by $u:=\partial_{t}^{(n+1) \gamma} \partial_{x_j}^{m \gamma} f_{\chi} \in L^2(\Rm \times \Rm^{d+1} \times \Sm^d)$. This ends the proof.
\end{proof}

\subsection{Step 3: Regularity in $k$ and conclusion}
Take $t_0>0$ and $T>0$ arbitrary with $t_0<T$, and let $\chi \in \calC^\infty(\Rm)$ such that $\chi(t)=1$ for $t \in [t_0,T]$, and $\chi(t)=0$ for $t\leq t_0/2$ and $t \geq 2T$. It follows from Proposition \ref{regx} that, for all $n\geq 0$ and $m \geq 0$,
$$
\partial_{t}^{m}\partial_{x_j}^{n} f \in L^2((t_0, T) \times \Rm^{d+1},H^\beta(\Sm^d)), \qquad j=1,\cdots,d+1.
$$
Since $t_0$ and $T$ are arbitrary, standard Sobolev embeddings then yield
$$
f \in \calC^\infty_{t,x}((0, \infty) \times \Rm^{d+1}, H^\beta(\Sm^d)).
$$
It thus remains to show the regularity in the $k$ variable. Define for this the operator $A:=-\bar \calL+\mathbb{I}$. It is direct to see that $A$ is a sectorial operator that generates a holomorphic semigroup $S(t)$. For $\theta \in (0, \pi/2]$, let indeed $S_\theta=\{ z\in \Cm, |arg(z)|\leq \pi/2-\theta, \Re(z)\geq 0\}$. The spectrum of $A$, denoted by $\sigma(A)$, satisfies $\sigma(A) \subset [1,\infty)$ and therefore $\sigma(A) \subset S_\theta$, for all $ \theta \in (0,\pi/2]$. Hence, for $z \in \Cm \backslash S_{\theta_1}$ with $0<\theta_1<\theta$, we have, 
$$
\|(z+A)^{-1}\|_{L^2(\Sm^d)}=\frac{1}{ \textrm{dist}(z,\sigma(A))} \leq \frac{1}{ \textrm{dist}(z,S_{\theta_1})},
$$
and it suffices to apply Theorem X.25 of \cite{RS-80-2} (note that $A$ is closed since it is self-adjoint). It then follows from \cite{RS-80-2}, Corollary 2, page 252, that, for any $\varphi \in L^2(\Sm^d)$, $S(t) \varphi \in D(A^p)$, for all $t>0$ and $p \in \Nm^*$, with the estimate
\be \label{holo}
\|A^p S(t) \varphi \|_{L^2(\Sm^d)} \leq C t^{-p} \|\varphi\|_{L^2(\Sm^d)}.
\ee
By interpolation, the latter estimate can be extended to all $p$ real and positive. Let then $u:=\partial_{t}^{m}\partial_{x_j}^{n}f $, $j=1,\cdots,d+1$, $n$ and $m$ arbitrary, let $\chi$ defined as before, and let $v:=-\chi\, k \cdot \nabla_x u - u (\chi'-1)$. Since $f$ is smooth in $(t,x)$, $v$ is defined for all $(t,x)$ and a.e. in $\Sm^d$. This allows us to use the following representation formula for $u \chi$,
$$
u(t)\chi(t)=\int_0^t S(t-s) v(s)ds.
$$
We show now that for all $(m,n,j)$, $A^{p/2} u \in L^\infty((0,T),L^2(\Rm^{d+1} \times \Sm^d))$, for all $p \in \Nm^*$. We proceed by induction, and start with the initial step $p=1$. According to \fref{boundQ}, we have, for all $(t,x)$,
$$
\langle A u, u \rangle = \langle A^{1/2} u, A^{1/2} u \rangle \leq C \|u\|^2_{H^\beta(\Sm^d)},
$$
which proves the claim together with Proposition \ref{regx}. Suppose now that for all $(m,n,j)$, $A^{p/2} u \in L^\infty((0,T),L^2(\Rm^{d+1} \times \Sm^d))$. We have
$$
A^{(p+1)/2} u(t)\chi(t)=\int_0^t A^{1/2}S(t-s) A^{p/2}v(s)ds.
$$
Above, we used the fact that $A^{(p+1)/2}$ is closed, and that $S$ and $A^{p/2}$ commute. Using then \fref{holo}, we find, for $0<t \leq T$,
$$
\|A^{(p+1)/2} u(t)\chi(t)\|_{L^2(\Rm^{d+1}\times \Sm^d)} \leq C \| A^{p/2}v\|_{L^\infty((0,T),L^2(\Rm^{d+1}\times \Sm^d))}\int_0^t (t-s)^{-1/2} ds,
$$
which is finite for all $(m,n,j)$ according to the induction hypothesis and proves the claim. We conclude the proof of the proposition by showing that $(-\Delta_d)^{p\beta /2 } u \in L^\infty((0,T),L^2(\Rm^{d+1}\times \Sm^d))$ for all $p \in \Nm^*$. We write for this, using \fref{boundQ}, for all $p \geq 1$,
\be \label{ap}
C \| A^{p} u\|^2_{H^\beta(\Sm^d)} \leq \langle A A^{p}u, A^{p}u \rangle=\langle A^{2p+1} u,u \rangle=\|A^{p+1/2} u\|^2_{L^2(\Sm^d)}.
\ee
Using the Funk-Hekke formula \cite{harmonics}, Chapter 2, page 36, it is not difficult to see that $A$ and $(-\Delta_d)^\frac{\beta}{2}$ commute, so that
 \bee
\| A^{p} u\|^2_{H^\beta(\Sm^d)}&=&\| A^{p} u\|^2_{L^2(\Sm^d)}+\| (-\Delta_d)^{\beta/2 }A^{p} u\|^2_{L^2(\Sm^d)}\\
&=&\| A^{p} u\|^2_{L^2(\Sm^d)}+\| A^{p} (-\Delta_d)^{\beta/2 } u\|^2_{L^2(\Sm^d)}.
\eee
With \fref{ap}, this finally yields
$$
\| A^{p} (-\Delta_d)^{\beta/2 } u\|_{L^2(\Sm^d)}\leq C \|A^{p+1/2} u\|_{L^2(\Sm^d)},
$$
and it suffices to iterate to conclude that, for any $0<t_0<T$, any $(m,n,j,p)$,
$$
(-\Delta_d)^{p \beta/2 }\partial_{t}^{m}\partial_{x_j}^{n}f \in L^\infty((t_0,T),L^2(\Rm^{d+1}\times \Sm^d)).
$$
The proof is ended with classical Sobolev embeddings.

\section{Proof of Theorem \ref{th2}}\label{proofth2}
The proof is based on standard convergence arguments for stochastic processes. The main ingredient is the spectral gap estimate $(iii)$ of Lemma \ref{lemL}.

\paragraph{Step 1: spectral gap and invariant measure.} Let us first verify that the hypothesis on the kernel $F$ in item $(iii)$ is verified. According to \fref{hypker}, for all $b>0$, there exists $\delta_0$ such that
$$ |F(s)-a_1\vert 1-s\vert ^{-\beta-d/2}| \leq b \vert 1-s\vert ^{-\beta-d/2}, \qquad \forall s \in [\delta_0,1).
$$
It remains to treat the case $s \in (-1,\delta_0)$. It is assumed here that there exists $\eta$ such that $F(x)\geq \eta>0$ a.e. on $(-1,1)$. Suppose $F(s)(1-s) ^{\beta+d/2}-a_1 \leq 0$ for a non empty subset $I$ of $(-1,\delta_0)$, otherwise we are done. Choose then $b<a_1$ such that the associated $\delta_0$ yields $a_1-\eta(1-\delta_0)^{\beta+d/2}>0$. Then, for all $s \in I$,
$$
a_1-F(s)(1-s) ^{\beta+d/2} \leq a_1-\eta(1-\delta_0)^{\beta+d/2},
$$
and the hypothesis is verified. The next lemma is a direct consequence of Lemma \ref{lemL} item $(iii)$:

\begin{lemma}\label{gap} We have the following properties for the operator $\bar \calL$ and the Markov process $\mathbf{M}$.
\begin{enumerate}
\item $0$ is a simple eigenvalue of $\bar \calL$ with associated normalized eigenvector the constant function equal to $e_0=\sigma (\Sm^d )^{-1/2}$.
\item There exists $\mathfrak{g}>0$ such that for all $\varphi\in L^2(\Sm^d)$, the semigroup 
\[P_t \varphi (k):=\E_k[\varphi(m(t))]\]
satisfies  for all $t\geq 0$
\begin{equation}\label{specgap}  \| P_t \varphi -\big<\varphi,\mathbf{1}\big>_{L^2(\tilde{\sigma})}\|_{L^2(\Sm^d)}  \leq e^{-\mathfrak{g}t}\|\varphi\|_{L^2(\Sm^d)}\end{equation}
with
\[\tilde{\sigma}=\frac{\sigma}{\sigma(\Sm^d)}\]
the uniform measure on $\Sm^d$.
\item $\tilde{\sigma}$ is the unique invariant measure for the Markov process $\mathbf{M}$.
\end{enumerate}
\end{lemma}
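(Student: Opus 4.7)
The plan rests on the spectral gap provided by item $(iii)$ of Lemma \ref{lemL}, whose hypothesis has been verified just above the statement under the assumption $F\geq\delta>0$. All three conclusions will flow from the coercivity estimate \fref{boundQ2}.

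For item 1, first observe that the constant function $e_0$ lies in $\calD(\bar\calL)$ and satisfies $\bar\calL e_0=0$, since the integrand defining $\calL$ vanishes pointwise on any constant. Conversely, if $f\in\calD(\bar\calL)$ satisfies $\bar\calL f=0$, then
\[
0=-\langle\bar\calL f,f\rangle=\calQ(f,f)\geq C\,\|f-\langle f,e_0\rangle e_0\|_{H^\beta(\Sm^d)}^2
\]
by \fref{boundQ2}, so $f$ is a scalar multiple of $e_0$; thus $0$ is a simple eigenvalue.

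For item 2, the first step is to identify the Markov semigroup $P_t$ with the $L^2$-semigroup $e^{t\bar\calL}$, which is standard from Dirichlet form theory (see \cite{fuku}). Writing $\Pi\varphi:=\langle\varphi,e_0\rangle e_0$ for the orthogonal projection onto $\ker\bar\calL$, a direct computation using $e_0=1/\sqrt{\sigma(\Sm^d)}$ gives $\Pi\varphi\equiv\langle\varphi,\mathbf{1}\rangle_{L^2(\tilde\sigma)}$. Since $\bar\calL$ is self-adjoint with kernel $\mathrm{span}(e_0)$, it leaves $\{e_0\}^\perp$ invariant. Combining \fref{boundQ2} with $\|f\|_{L^2}\leq\|f\|_{H^\beta}$ yields
\[
-\langle\bar\calL f,f\rangle\geq\mathfrak{g}\,\|f\|_{L^2(\Sm^d)}^2,\qquad\forall f\in\calD(\bar\calL)\cap\{e_0\}^\perp,
\]
for some $\mathfrak{g}>0$. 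The spectral theorem applied to $-\bar\calL\big|_{\{e_0\}^\perp}$, whose spectrum then lies in $[\mathfrak{g},\infty)$, produces $\|e^{t\bar\calL}\varphi_\perp\|_{L^2}\leq e^{-\mathfrak{g}t}\|\varphi_\perp\|_{L^2}$, and \fref{specgap} follows from $P_t\varphi-\Pi\varphi=e^{t\bar\calL}(\varphi-\Pi\varphi)$ together with $\|\varphi-\Pi\varphi\|_{L^2}\leq\|\varphi\|_{L^2}$.

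For item 3, invariance of $\tilde\sigma$ is a consequence of self-adjointness: since $e^{t\bar\calL}e_0=e_0$, one has
\[
\int_{\Sm^d} P_t\varphi\,d\tilde\sigma=\frac{1}{\sqrt{\sigma(\Sm^d)}}\langle e^{t\bar\calL}\varphi,e_0\rangle=\frac{1}{\sqrt{\sigma(\Sm^d)}}\langle\varphi,e_0\rangle=\int_{\Sm^d}\varphi\,d\tilde\sigma.
\]
For uniqueness, let $\mu$ be any invariant probability measure; the strict positivity $F\geq\delta$ makes the jump process irreducible (any $k$ can reach a neighborhood of any $p$ with positive intensity), and \fref{specgap} shows that the only $P_t$-invariant elements of $L^2(\tilde\sigma)$ are the constants, so the symmetric Markov process is ergodic with respect to $\tilde\sigma$. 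Standard arguments for symmetric Markov processes then force $\mu=\tilde\sigma$. The main technical obstacle is transferring the $L^2(\tilde\sigma)$-convergence from \fref{specgap} to convergence against the a priori possibly singular measure $\mu$: this is handled by using irreducibility to show $\mu\ll\tilde\sigma$, after which \fref{specgap} and dominated convergence give $\int\varphi\,d\mu=\lim_{t\to\infty}\int P_t\varphi\,d\mu=\int\varphi\,d\tilde\sigma$ for every $\varphi\in L^2(\tilde\sigma)\cap L^\infty$, hence $\mu=\tilde\sigma$.
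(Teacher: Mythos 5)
Your proposal is correct in overall structure and rests on the same pillar as the paper's proof, namely the coercivity estimate \fref{boundQ2}, but it differs in mechanism at two points. For item 2, the paper does not invoke the spectral theorem: it first proves invariance of $\tilde\sigma$ (via $\int_{\Sm^d}\bar\calL\varphi\,d\tilde\sigma=0$ and a result of Ethier--Kurtz), then sets $v(t)=\|P_t\phi\|^2_{L^2(\tilde\sigma)}$ for mean-zero $\phi$ with $\varphi\in\calD(\bar\calL)$, differentiates, applies item $(iii)$ of Lemma \ref{lemL} to $P_t\phi$ (invariance is used precisely to keep $P_t\phi$ mean-zero), and concludes by a Gronwall argument plus density. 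Your route through the spectral theorem for $-\bar\calL$ restricted to $\{e_0\}^\perp$ is equally valid and arguably cleaner: it exploits self-adjointness to avoid both the differentiation step and the density argument, and it lets you \emph{derive} invariance of $\tilde\sigma$ afterwards from $e^{t\bar\calL}e_0=e_0$, reversing the paper's logical order. The trade-off is that the Gronwall method survives in non-self-adjoint settings, whereas yours is tied to the symmetric Dirichlet-form structure; both need the identification $P_t=e^{t\bar\calL}$ from \cite{fuku}, which you state explicitly and the paper leaves implicit.

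On item 3, you have correctly spotted a real subtlety, but your patch is not proved and does not hold as stated. The paper extracts $t_n\to\infty$ with $P_{t_n}\varphi\to\int\varphi\,d\tilde\sigma$ a.e.\ on $\Sm^d$ and applies dominated convergence against $\mu$; as you observe, a.e.\ convergence with respect to $\tilde\sigma$ only transfers to $\mu$ if $\mu\ll\tilde\sigma$. However, irreducibility in the sense you invoke (positive jump intensity from any $k$ to any neighborhood) does \emph{not} by itself exclude singular invariant measures; what is actually needed is absolute continuity of the transition kernels, $P_t(k,\cdot)\ll\sigma$ for $t>0$ (or a strong Feller property upgrading the $L^2(\tilde\sigma)$ convergence in \fref{specgap} to pointwise convergence everywhere). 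In the present setting this is plausible --- the infinite jump intensity and the hypoelliptic smoothing behind Theorem \ref{th1} suggest $P_t$ has a smooth density for $t>0$ --- but neither you nor the paper proves it, so as written your ``irreducibility $\Rightarrow\mu\ll\tilde\sigma$'' step is a genuine gap, of essentially the same nature as the one the paper's own dominated-convergence step glosses over. If you want a complete argument, establish the regularizing property of $P_t$ (e.g.\ by the methods of Section \ref{proofth1} without the transport term), deduce $P_t(k,\cdot)\ll\sigma$, and then conclude from $\mu=\mu P_t$ that any invariant $\mu$ is absolutely continuous, after which your limiting identity closes the proof.
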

\begin{proof}
The proof of the first item is straightforward owing Lemma \ref{lemL}. Regarding the second item, we first remark that $\tilde \sigma$ is an invariant measure for $\mathbf{M}$. This follows indeed from the fact that 
$$
\int_{\Sm^d} \bar \calL \varphi (k) d \tilde \sigma(k)=0, \qquad \forall \varphi \in \calC^\infty(\Sm^d), 
$$
and \cite[Proposition 9.2 pp. 239]{ethier}. Let now $v(t)=\|P_t\phi\|^2_{L^2(\tilde{\sigma})}$ with $\phi=\varphi- \big<\varphi , \mathbf{1}\big>_{L^2(\tilde{\sigma})}$ and $\varphi \in \calD(\bar \calL)$. Since $\tilde \sigma$ is an invariant measure, $\langle P_t\phi, \mathbf{1}\rangle_{L^2(\tilde{\sigma})}=\langle\phi, \mathbf{1}\rangle_{L^2(\tilde{\sigma})}=0$. Hence, 
\[v'(t)=2\big<P_t\phi,\bar{\calL} P_t \phi\big>_{L^2(\tilde{\sigma})}\leq -\mathfrak{g} \| P_t\phi\|^2_{L^2(\tilde{\sigma})}=-\mathfrak{g} v(t), \]
by application of item $(iii)$ of Lemma \ref{lemL} to $P_t\phi$. Therefore,  $t\mapsto v(t)e^{\mathfrak{g} t }$ is a nonincreasing function, and then
\[v(t) = \|P_t\varphi -  \big<\varphi , \mathbf{1}\big>_{L^2(\tilde{\sigma})} \|^2_{L^2(\tilde{\sigma})}\leq e^{-\mathfrak{g} t} \|\phi\|^2_{L^2(\tilde{\sigma})} \leq e^{-\mathfrak{g} t} \|\varphi\|^2_{L^2(\tilde{\sigma})}.\]
The latter estimate is then extended to all $\varphi \in L^2(\Sm^d)$ by density. For the last point, if $\mu$ is another invariant measure, we have by definition $\int \varphi d\mu=\int P_t\varphi d\mu$ for all $t\geq 0$ and all $\varphi\in\calC^\infty(\Sm^d)$. Because of \eqref{specgap}, there exists $(t_n)_n$ such that $t_n\to+\infty$ and
\[\lim_{n\to+\infty} P_{t_n}\varphi = \int_{\Sm^d}\varphi d\tilde{\sigma}\quad\text{a.e. on } \Sm^d.\]
Dominated convergence then yields
\[  \int_{\Sm^d}\varphi d\mu=\int_{\Sm^d}\varphi d\tilde{\sigma},\]
which shows that $\mu=\tilde \sigma$ since $\varphi$ is arbitrary.
\end{proof}

\paragraph{Step 2:  convergence of $Y_{x}^\eps$.} We apply here standard diffusion-approximation theorems based on the martingale formulation and perturbed test functions, see e.g. \cite{FGPS-07,kunita,Kushner,stroock}. These theorems hold for instance when $m$ is an ergodic Markov process. This is the case here since a consequence of the spectral gap estimate \fref{specgap} is that $m$ has a unique invariant measure (see Lemma \ref{gap} (3)), which implies that $m$ is ergodic. Note that $m$ is not stationary since its initial condition is not drawn according to the invariant measure. This has no consequence since \fref{specgap} shows that $\bar{\calL}$ satisfies the Fredholm alternative, which is what is mostly needed for the construction of the perturbed test functions in the martingale techniques: as soon as $\int_{\Sm^d}d\tilde{\sigma}(k)g(k)=0$, the Poisson equation
$$\bar{\calL}u=g,\qquad g\in L^2(\Sm^d)
$$
admits a unique solution (up to a constant) that reads
\begin{equation}\label{solh}u(k)=-\int_0^{+\infty} P_t g(k)dt.\end{equation}
Here $(P_t)_{t\geq 0}$ is the semi-group associated with the Markov process $\mathbf{M}$. The integral in \eqref{solh} is well defined thanks to \eqref{specgap}.

As a result, we can apply the techniques of \cite[Chapter 6]{FGPS-07} or of \cite[Theorem 6.1]{garnier_esaim}, in order to show that $(Y^\eps_{x})_\eps$ converges in law in $\calC^0([0,\infty),\Rm^{d+1})$ as $\eps\to 0$ to a diffusion process with generator 
\[\tilde{\calL}_0\varphi = \sum_{j,l=1}^{d+1} \partial^2_{x_j x_l}\varphi \int_0^{+\infty}\E_{\tilde{\sigma}}[m_j(0)m_{l}(u)]du,\]
where $\E_{\tilde{\sigma}}$ denotes expectation with respect to the invariant measure $\tilde{\sigma}$, and $m_l(u)$ is the $l$-th component of the vector $m(u)$. The explicit form of the diffusion matrix is given in the lemma below:
\begin{lemma}\label{coefD}
We have
\[D_{jl}:=\int_0^{+\infty}\E_{\tilde{\sigma}}[m_j(0) m_l(u)]du=\frac{1}{\mathfrak{C}}\int_{\Sm^d} d\tilde{\sigma}(k) k_jk_l,\]
with
\[\mathfrak{C}=\sigma(\Sm^{d-1})\int_{-1}^1F(s)(1-s^2)^{(d-2)/2}(1-s)ds \in (0,\infty).\]
\end{lemma}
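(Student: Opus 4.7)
The plan is to use the standard variance formula for the diffusion coefficient in terms of the Poisson equation, namely
\[D_{jl}=\langle m_j,\chi_l\rangle_{L^2(\tilde\sigma)},\qquad -\bar\calL\chi_l=m_l,\]
where $m_l(k):=k_l$. By the spectral gap estimate \eqref{specgap}, the Fredholm alternative applies to $\bar\calL$ on the orthogonal complement of constants, and since $\int_{\Sm^d}k_l\,d\tilde\sigma(k)=0$ by symmetry, the equation $-\bar\calL\chi_l=m_l$ admits a unique centred solution $\chi_l$. Moreover, rewriting $\chi_l$ via \eqref{solh} gives
\[\chi_l(k)=\int_0^\infty P_um_l(k)\,du,\]
and integrating against $m_j$ against $\tilde\sigma$ yields exactly $\int_0^\infty \E_{\tilde\sigma}[m_j(0)m_l(u)]\,du$ by stationarity of $\tilde\sigma$.

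The key observation is that linear functions are eigenfunctions of $\calL$. Fix $k\in\Sm^d$, parametrize $p=sk+\sqrt{1-s^2}\,u$ with $s=k\cdot p\in[-1,1]$ and $u\in\Sm^{d-1}\subset k^{\perp}$, so that $d\sigma(p)=(1-s^2)^{(d-2)/2}\,ds\,d\sigma(u)$. Then $p_l=sk_l+\sqrt{1-s^2}\,u_l$, and integrating in $u$ over $\Sm^{d-1}\cap k^{\perp}$ kills the transverse component since $\int_{\Sm^{d-1}}u\,d\sigma(u)=0$. Therefore, writing $p_l-k_l=(s-1)k_l+\sqrt{1-s^2}\,u_l$,
\[\calL m_l(k)=k_l\,\sigma(\Sm^{d-1})\int_{-1}^1 F(s)(s-1)(1-s^2)^{(d-2)/2}\,ds=-\mathfrak C\, k_l.\]
Note that the principal value is not needed here because the factor $(s-1)$ kills one order of singularity: near $s=1$, the integrand behaves like $(1-s)^{1-\beta-d/2}(1-s)^{(d-2)/2}=(1-s)^{-\beta}$, which is integrable since $\beta\in(0,1)$. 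This also shows $\mathfrak C\in(0,\infty)$; positivity follows from $F\geq\delta>0$ on $(-1,1)$, while the lower bound in the spectral gap guarantees $\mathfrak C>0$ independently.

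Consequently $\chi_l(k)=k_l/\mathfrak C$ solves $-\bar\calL\chi_l=m_l$, and
\[D_{jl}=\langle m_j,\chi_l\rangle_{L^2(\tilde\sigma)}=\frac{1}{\mathfrak C}\int_{\Sm^d}k_jk_l\,d\tilde\sigma(k),\]
which is the announced formula. The only genuinely delicate point is justifying the Poisson-equation representation: one must check that $m_l\in L^2(\tilde\sigma)$ lies in the range of $\bar\calL$ and that $\chi_l=m_l/\mathfrak C$ indeed belongs to $\calD(\bar\calL)$. The latter is immediate because $m_l$ is smooth, so by Lemma \ref{lemL}(ii) $m_l\in H^\beta(\Sm^d)$ and the pointwise identity $\calL m_l=-\mathfrak C\,m_l\in L^2(\Sm^d)$ confirms $m_l\in\calD(\bar\calL)$. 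The representation of $D_{jl}$ as $\langle m_j,-\bar\calL^{-1}m_l\rangle_{L^2(\tilde\sigma)}$ then follows from Fubini and the exponential decay \eqref{specgap} of $P_u$ on mean-zero functions, which makes all integrals absolutely convergent.
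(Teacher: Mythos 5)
Your proof is correct, and it hinges on exactly the same computation as the paper's: parametrizing $p=sk+\sqrt{1-s^2}\,u$ and using $\int_{\Sm^{d-1}}u\,d\sigma(u)=0$ to show that the coordinate functions are eigenfunctions, $\calL m_l=-\mathfrak{C}\,m_l$, with the $(s-1)$ factor removing the principal value and giving $\mathfrak{C}\in(0,\infty)$. The difference is only in how this fact is converted into the formula for $D_{jl}$. The paper works forward in time: it writes the martingale associated with $\varphi_l(p)=p_l$, obtains the closed integral equation $\E_{k}[m_l(t)]=k_l-\mathfrak{C}\int_0^t\E_{k}[m_l(u)]\,du$, solves it as $\E_{k}[m_l(t)]=k_l e^{-\mathfrak{C}t}$, and integrates in $t$. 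You instead invert the generator: the eigenfunction relation gives the explicit centred corrector $\chi_l=m_l/\mathfrak{C}$ solving $-\bar\calL\chi_l=m_l$, and the Green--Kubo identity $D_{jl}=\langle m_j,\chi_l\rangle_{L^2(\tilde\sigma)}$, justified via \eqref{solh}, Fubini and the exponential decay \eqref{specgap}, yields the result. The two routes are strictly equivalent here, since $\calL m_l=-\mathfrak{C}m_l$ is the same as $P_u m_l=e^{-\mathfrak{C}u}m_l$; yours has the small merit of exhibiting the corrector explicitly (the very object used in the perturbed-test-function construction of Step 2), while the paper's also records the transient decay of $\E_k[m_l(t)]$, which it reuses verbatim in the tightness estimate \eqref{calc1} of Theorem \ref{th3}. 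Your checks that $m_l\in\calD(\bar\calL)$ (smoothness plus $\calL m_l\in L^2$, via Lemma \ref{lemL}) and that $m_l$ is mean-zero are the right ones; the only cosmetic caveat, shared with the paper, is that the parametrization $d\sigma(p)=(1-s^2)^{(d-2)/2}\,ds\,d\sigma(u)$ presumes $d\geq 2$, the case $d=1$ following by the same trivial adaptation as in Lemma \ref{lemL}.
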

\begin{proof} First, let us remark that, since $m_j(0)=k_j$,
\[\E_{\tilde{\sigma}}[m_j(0)m_l(u)]=\int_{\Sm^d} d\tilde{\sigma}(k)k_j\E_{\tilde{\sigma}}[m_l(u)]. \]
Then, since $m$ is a Markov process, the process $\varphi_l(m(t))-\varphi_l(m(0))-\int_0^t\bar{\calL}\varphi_l (m(u))du$ with $\varphi_l(p)=p_l$, is a martingale. Hence,
\[\begin{split} &\E_{\tilde{\sigma}}[m_l(t)]=k_l+\E_{\tilde{\sigma}} \Big[ \int_0^t  \textrm{p.v.}\int_{\Sm^d}d\sigma(p)F(p\cdot m(u))(p_l-m_l(u))du\Big] \\
&=k_l+\E_{\tilde{\sigma}} \Big[\int_0^t \lim_{\eta \to 0}\int_{-1}^{1-\eta}ds\int_{\Sm^{d-1}}d\sigma(v)F(s)(1-s^2)^{(d-2)/2}(\sqrt{1-s^2} v_l+(s-1)m_l(u) )du\Big].
\end{split}\]
Since $\int_{\Sm^{d-1}}d\sigma(v) v_l=0$, and $F(s)(1-s^2)^{(d-2)/2}(1-s)$ is integrable thanks to \fref{hypker}, the limit $\eta \to 0$ is finite and we obtain
\[\begin{split} &\E_{\tilde{\sigma}}[m_l(t)]=k_l-\sigma(\Sm^{d-1})\int_{-1}^1ds F(s)(1-s^2)^{(d-2)/2}(1-s)\int_0^t \E_{\tilde{\sigma}}[m_l(u)]du.
\end{split}\]
Therefore, we have $\E_{\tilde{\sigma}}[m_l(t)]=k_l e^{-\mathfrak{C}t}$ and then
\[ D_{jl}=\frac{1}{\mathfrak{C}}\int_{\Sm^d} d\tilde{\sigma}(k) k_jk_l.\]
\end{proof}

\paragraph{Step 3: convergence to the diffusion equation and conclusion.} The following result shows that the limit of $f^\eps$ is characterized by the limit of $Y_{x}^\eps$. 

\begin{lemma}\label{ergodic}
We have for all $t>0$, all $f^0 \in L^2(\Rm^{d+1} \times \Sm^d)$, and all $\varphi \in L^2(\Rm^{d+1}\times \Sm^d) $:
\[ \lim_{\eps\to 0}\int_{\Rm^{d+1}\times \Sm^d} dxd\sigma(k)\left(f^\eps(t,x,k) - \tilde{f}^\eps(t,x,k)\right)\varphi(x,k)=0,\]
where
\[\tilde{f}^\eps(t,x,k) = \int_{\Sm^d} d\tilde{\sigma}(p) \E_k[f^0(Y^\eps_{x}(t),p)],\]
with $\tilde{ \sigma}$ the uniform measure on $\Sm^d$ and we recall that
\[Y^\eps_{x}(t)=x-\eps \int_0^{t/\eps^2}m(u)du.\]
\end{lemma}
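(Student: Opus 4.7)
The plan is to combine the $L^2$-contractivity of the semigroup $T^\eps_t$ with the spectral-gap estimate of Lemma \ref{gap} through a time-splitting argument, which will show directly that $f^\eps - \tilde f^\eps \to 0$ in $L^2(\Rm^{d+1}\times \Sm^d)$ (a fortiori against any test $\varphi$).

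\textbf{Reduction and density.} I would first decompose $f^0(x,k) = F(x) + h(x,k)$, with $F(x) = \int_{\Sm^d} f^0(x,p)\, d\tilde\sigma(p)$ and $h = f^0 - F$, so that $\int_{\Sm^d} h(x,p)\, d\tilde\sigma(p) = 0$ for every $x$. Substituted into $\tilde f^\eps$, the $h$-piece integrates to zero against $d\tilde\sigma(p)$, giving $\tilde f^\eps(t,x,k) = \E_k[F(Y^\eps_x(t))]$, while by linearity $f^\eps(t,x,k) = \E_k[F(Y^\eps_x(t))] + \E_k[h(Y^\eps_x(t), m(t/\eps^2))]$. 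Hence
\[
f^\eps - \tilde f^\eps = \E_k[h(Y^\eps_x(t), m(t/\eps^2))].
\]
Using the contraction bound of Proposition \ref{existweak} and an analogous bound for $\tilde f^\eps$ (Cauchy--Schwarz plus translation invariance of Lebesgue measure in $x$), a density argument uniform in $\eps$ reduces the problem to $h = \phi(x)\psi(k)$ with $\phi\in\calC^1_c(\Rm^{d+1})$ and $\psi\in\calC(\Sm^d)$ satisfying $\int\psi\, d\tilde\sigma = 0$.

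\textbf{Time-splitting and spectral gap.} The key is to introduce an intermediate scale $\delta_\eps$ with $\eps^2 \ll \delta_\eps \ll \eps$, say $\delta_\eps = \eps^{3/2}$, so that $\delta_\eps/\eps \to 0$ (macroscopic displacement over $[t-\delta_\eps,t]$ is negligible) and $\delta_\eps/\eps^2\to\infty$ (the Markov chain $m(\cdot/\eps^2)$ undergoes infinitely many mixing times). Since $|Y^\eps_x(t)-Y^\eps_x(t-\delta_\eps)|\le \delta_\eps/\eps$ almost surely, translation continuity in $L^2(\Rm^{d+1})$ applied to $\phi\in\calC^1_c$ yields
\[
\Big\Vert \E_k\big[(\phi(Y^\eps_x(t))-\phi(Y^\eps_x(t-\delta_\eps)))\psi(m(t/\eps^2))\big]\Big\Vert_{L^2(dxd\sigma(k))} \le C\,\frac{\delta_\eps}{\eps}\|\nabla_x\phi\|_{L^2}\|\psi\|_\infty \to 0.
\]
By the Markov property at time $(t-\delta_\eps)/\eps^2$, the remaining piece becomes
\[
\E_k\big[\phi(Y^\eps_x(t-\delta_\eps))\,(P_{\delta_\eps/\eps^2}\psi)(m((t-\delta_\eps)/\eps^2))\big].
\]
Squaring this expression, applying Cauchy--Schwarz inside $\E_k$, integrating over $x$ (using $\int dx\,\E_k[\phi^2(Y^\eps_x(s))]=\|\phi\|_{L^2}^2$ by the translation invariance noted above), and then integrating over $k$ against $d\sigma$ (using invariance of $\tilde\sigma$ under $P_s$), bounds the $L^2(dxd\sigma(k))$ norm of the display by $\|\phi\|_{L^2}\sqrt{\sigma(\Sm^d)}\|P_{\delta_\eps/\eps^2}\psi\|_{L^2(\tilde\sigma)}$. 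The spectral gap \fref{specgap} applied to $\psi$, which has vanishing $\tilde\sigma$-mean, gives $\|P_{\delta_\eps/\eps^2}\psi\|_{L^2(\tilde\sigma)}\le e^{-\mathfrak{g}\delta_\eps/\eps^2}\|\psi\|_{L^2(\tilde\sigma)}\to 0$.

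\textbf{Conclusion.} Combining the two bounds shows $\|f^\eps(t)-\tilde f^\eps(t)\|_{L^2(dxd\sigma(k))} \to 0$ for each tensor product $h$, and the density argument of the first step extends this to arbitrary $f^0\in L^2$. A final Cauchy--Schwarz against $\varphi\in L^2$ gives the claimed weak convergence. The main obstacle is the delicate reconciliation of the two time scales: one needs $\delta_\eps$ small enough that the transport displacement $\delta_\eps/\eps$ vanishes, yet large enough that $\delta_\eps/\eps^2$ runs much longer than the mixing time $1/\mathfrak{g}$ of the Markov chain; the window $\eps^2\ll\delta_\eps\ll\eps$ precisely allows this, and the crucial use of the invariant measure $\tilde\sigma$ (to exchange $d\sigma(k)$-integration for an $L^2(\tilde\sigma)$-estimate on $P_{\delta_\eps/\eps^2}\psi$) avoids needing pointwise-in-$k$ control on the semigroup.
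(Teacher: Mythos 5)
Your proof is correct, and it takes a genuinely different route from the paper's. The paper argues at the PDE level: writing $\psi^\eps=f^\eps-\tilde f^\eps=T_t^\eps\phi$ with $\phi=f^0-\big<f^0,\mathbf{1}\big>_{L^2(\tilde\sigma)}$ (the same reduction as your first step), it differentiates $v(t)=\|\psi^\eps(t)\|^2_{L^2}$, uses the coercivity of the Dirichlet form (item $(iii)$ of Lemma \ref{lemL}) and a Gronwall argument to get the exponential decay \fref{convdiffest} of the fluctuating part $u^\eps=\psi^\eps-\big<\psi^\eps,\mathbf{1}\big>_{L^2(\tilde\sigma)}$, and then handles the $k$-average $\Psi^\eps$ only \emph{weakly}: from $\eps\partial_t\Psi^\eps=-\int_{\Sm^d} k\cdot\nabla_x u^\eps\,d\tilde\sigma(k)$, an integration by parts against $\calC^1_c$ test functions, and $\int_0^t\|u^\eps(s)\|ds=O(\eps^2)$, it obtains an $O(\eps)$ bound as in \fref{convdiffest2}. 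You instead stay entirely within the probabilistic representation and use a two-time-scale split $\eps^2\ll\delta_\eps\ll\eps$: translation invariance in $x$ controls the $O(\delta_\eps/\eps)$ displacement over $[t-\delta_\eps,t]$, the Markov property converts the terminal factor into $P_{\delta_\eps/\eps^2}\psi$, and the invariance of $\tilde\sigma$ together with the gap \eqref{specgap} (the same spectral gap as the paper's, but used through the semigroup rather than through the quadratic form) makes it exponentially small. Your conclusion is in fact strictly stronger: you get \emph{strong} $L^2(\Rm^{d+1}\times\Sm^d)$ convergence of $f^\eps(t)-\tilde f^\eps(t)$ for each $t>0$, with a rate ($O(\eps^{1/2})$ for $\delta_\eps=\eps^{3/2}$, and $O(\eps\log(1/\eps))$ if one optimizes $\delta_\eps\sim\eps^2\log(1/\eps)$), whereas the paper's method leaves the $k$-average piece converging only weakly; this is consistent, since strong implies the weak statement of the lemma. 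Two small points to tighten: the density reduction should be phrased as finite linear combinations of tensors $\phi\otimes\psi$ with $\psi$ of zero $\tilde\sigma$-mean (simple tensors alone are not dense), and the uniformity in $\eps$ needed there should be made explicit via the bound $\|\E_k[h(Y^\eps_x(t),m(t/\eps^2))]\|_{L^2(dx\,d\sigma(k))}\leq \sigma(\Sm^d)^{1/2}\|h\|_{L^2(dx\,d\tilde\sigma)}$, which follows from the same Cauchy--Schwarz/Fubini/invariant-measure computation you use for the main term (or from the contraction of Proposition \ref{existweak}).
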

\begin{proof}
The proof follows once more from the spectral gap estimate. Following Theorem \ref{th1}, we know that
\[f_\eps(t,x,k)=T_t^\eps f^0(x,k)=\E_k\Big[f^0\Big(x-\eps\int^{t/\eps^2}_0 m(u)du,m(t/\eps^2)\Big)\Big],\]
 as well as $\tilde{f}^\eps$ are smooth functions (note that we defined $T_t^\eps$ on $L^2$ here). Hence, setting
\[\phi(x,k):=f^0(x,k)-\big<f^0(x,\cdot) ,\mathbf{1}\big>_{L^2( \tilde{ \sigma})}, \qquad \psi^\eps(t,\cdot,\cdot):=T^\eps_t\phi \equiv f^\eps (t,\cdot,\cdot)- \tilde{f}^\eps(t,\cdot,\cdot),\]
and defining $v(t)=\|\psi^\eps(t) \|^2_{L^2(\Rm^{d+1}\times\Sm^d)}$, we can differentiate $v(t)$ and use both the transport equation \fref{diffusioneq} and item $(iii)$ of Lemma \ref{lemL} to arrive at
\begin{align*}
& \|\psi^\eps(t)-\big<\psi^\eps(t),\mathbf{1}\big>_{L^2( \tilde{ \sigma})} \|^2_{L^2(\Rm^{d+1}\times\Sm^d)}\leq v(t)=v(0)+\frac{2}{\eps^2}\int_0^t\big<\psi^\eps(s),\bar{\calL} \psi^\eps (s)\big>_{L^2(\Rm^{d+1}\times\Sm^d)}ds \\
& \hspace{2cm} \leq v(0)-\frac{C}{\eps^2} \int_0^t \|\psi^\eps(s)-\big<\psi^\eps(s),\mathbf{1}\big>_{L^2( \tilde{ \sigma})} \|^2_{L^2(\Rm^{d+1}\times\Sm^d)} ds,
\end{align*}
which yields,
\be
\label{convdiffest}
 \|\psi^\eps(t)-\big<\psi^\eps(t),\mathbf{1}\big>_{L^2( \tilde{ \sigma})}\|_{L^2(\Rm^{d+1}\times\Sm^d)}\leq e^{-C t/\eps^2} \|\phi\|_{L^2(\Rm^{d+1}\times\Sm^d)}.
\ee
This shows that $\psi^\eps$ converges strongly to its average in $k$. We show now that this average converges weakly to zero, which implies that $\psi^\eps$ converges weakly to zero as well. Let indeed $u^\eps=\psi^\eps-\big<\psi^\eps,\mathbf{1}\big>_{L^2( \tilde{ \sigma})}$, and define
$$
\Psi^\eps=\big<\psi^\eps ,\mathbf{1}\big>_{L^2( \tilde{ \sigma})}, \qquad \Psi^\eps(t=0)=0,
$$ 
which satisfies
$$
\eps \partial_t \Psi^\eps=- \int_{\Sm^d} k \cdot \nabla_x \psi^\eps  \; d \tilde{\sigma}(k)=- \int_{\Sm^d} k \cdot \nabla_x u^\eps \; d \tilde{\sigma}(k)
$$
since $\int_{\Sm^d} k \tilde{\sigma}(k)=0$. It then follows from \fref{convdiffest}, for all $t>0$ and for all $ \varphi \in \calC^1_c(\Rm^{d+1})$,
\be
\label{convdiffest2}
\lim_{\eps \to 0} \left| \int_{\Rm^{d+1}} \Psi^\eps(t,x) \varphi(x) dx \right| \leq C \lim_{\eps \to 0} \eps^{-1}\| \nabla_x \varphi \|_{L^2(\Rm^{d+1})} \int_0^t \| u^\eps(s)\|_{L^2(\Rm^{d+1}\times\Sm^d)} ds=0.
\ee
The proof is concluded by using the density of $\calC^1_c(\Rm^{d+1})$ in $L^2(\Rm^{d+1})$, and the fact that the $L^2$ norm in $x$ and $k$ of $\psi^\eps$ is uniformly bounded in $t$ and $\eps$.
\end{proof}

It thus only remains now to address the convergence of $\tilde f^\eps$. When $f^0 \in \calC^0_b(\Rm^{d+1} \times \Sm^{d+1})\cap L^2(\Rm^{d+1} \times \Sm^{d+1})$, we conclude from dominated convergence and the convergence in law of $Y_{x}^\eps$ to $Y_{x}$ that for all $(t,x,k)\in(0,+\infty)\times \Rm^{d+1} \times \Sm^d$, $\tilde{f}_\eps(t,x,k)$ converges pointwise to 
$$\tilde f(t,x,k)=\int_{\Sm^d} d\tilde{\sigma}(p)  \E^{Y_x}[f^0(y_t,p)],$$
where we recall that $\E^{Y_x}$ denotes expectation with respect to the law of $Y_x$. Note that this limiting law is independent of $k$ since the diffusion coefficient $D$ is itself independent of $k$, and we can write
$$
\tilde f(t,x,k)=\tilde f(t,x)= \E^{Y_x}[\tilde f^0(y_t)], \qquad \textrm{where} \quad \tilde f^0(x)=\int_{\Sm^d} d\tilde{\sigma}(p)f^0(x,p).
$$
The latter form is a probabilistic representation of the unique solution to the diffusion equation $\partial_t u=\tilde{\calL_0} u$ with initial condition $u(0,x)=\tilde f^0(x)$, and therefore $\tilde f(t,x)$ is the unique solution to \fref{eqdiffusion}. When $f^0$ is only an $L^2$ function, we consider a regularized version of it denoted by $\tilde f^0_n$. As above, the associated $\tilde f^\eps_n$ converges pointwise in $(t,x)$ to $ \E^{Y_x}[\tilde f^0_n(y_t)]$. It is then not difficult to show that solutions to \fref{eqdiffusion} satisfy an estimate of the form $\| f(t)\|_{L^2} \leq \| f(0)\|_{L^2}$, and this, together with the latter pointwise convergence and the fact that the semigroup $T^\eps_t$ is continuous in $L^2$ show that, for all $t>0$,
$$
\int_{\Rm^{d+1} \times \Sm^d} dxd\sigma f^\eps(t,x,k)\varphi(x,k) \to \int_{\Rm^{d+1} \times \Sm^d} dxd\sigma \tilde f(t,x)\varphi(x,k), \qquad \forall \varphi \in \calC^0_c(\Rm^{d+1} \times \Sm^d).
$$
Since $\calC^0_c(\Rm^{d+1} \times \Sm^d)$ is dense in $L^2(\Rm^{d+1} \times \Sm^d)$, this implies that $f^\eps(t,x,k)$ converges, for all $t>0$, weakly in $L^2$ to $f(t,x)$. This concludes the proof of Theorem \ref{th2}.

\section{Proof of Theorem \ref{th3}}\label{proofth3}

The convergence of $(f_{\eps})_\eps$, as well as the one of the measure $\mathbb{X}^\eps_{x,k}$, is obtained via the convergence of the law of $m^\eps$ starting at $k$, that we denote by $\mathbb{M}^\eps_{k}$. In fact, we have
\[
f_\eps(t,x,k)= \E_k\Big[f^0\Big(x-\int_0^t m^\eps(u) du, m^\eps(t)\Big)\Big]= \E^{\mathbb{M}^\eps_{k}}[f^0(\Theta_x(t)],\]
where 
\[
\begin{array}{rcl}
\Theta_x:\,\calD([0,+\infty),\Sm^d)&\longrightarrow &\calD([0,+\infty),\Rm^{d+1} \times \Sm^d)\\
\omega &\mapsto & t\mapsto\big(  x-\int_0^t \omega(u) du,\omega(t)\big)
\end{array}
\]
is a continuous application for the Skorohod topology. Note that we have $\mathbb{X}^\eps_{x,k}=\mathbb{M}^\eps_{k}\circ \Theta^{-1}_x$.
\begin{lemma}\label{relatcomp}
$(\mathbb{M}^\eps_{k})_\eps$ is tight in $\mathcal{D}([0,+\infty), \Sm^d)$. 
\end{lemma}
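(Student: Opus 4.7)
The plan is to use the criterion that, since $\Sm^d$ is compact, compact containment is automatic, and tightness of $(\mathbb{M}^\eps_k)_\eps$ in $\calD([0,\infty),\Sm^d)$ is equivalent to tightness of $(\varphi(m^\eps))_\eps$ in $\calD([0,\infty),\Rm)$ for $\varphi$ ranging over a dense subset of $\calC(\Sm^d)$, e.g. $\calC^\infty(\Sm^d)$ (see e.g. Ethier-Kurtz or Jakubowski's criterion). To verify the latter for a fixed $\varphi\in\calC^\infty(\Sm^d)$, I would use the martingale decomposition provided by the generator $\bar\calL_\eps$:
\[
M_\eps^\varphi(t):=\varphi(m^\eps(t))-\varphi(k)-\int_0^t \bar\calL_\eps\varphi(m^\eps(s))\,ds
\]
is a $\mathbb{P}_k$-martingale, so it suffices to control the finite-variation part $\int_0^\cdot \bar\calL_\eps\varphi$ and the quadratic variation of $M_\eps^\varphi$ uniformly in $\eps$.

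The key step, and the main obstacle, is the uniform bound
\[
\sup_{\eps\in(0,1)}\|\bar\calL_\eps\varphi\|_{L^\infty(\Sm^d)}\leq C(\varphi).
\]
This is proved by a rescaled version of the argument in Lemma \ref{lemL}(i). Fixing $k=e_{d+1}$ and parametrizing $p=(\sqrt{1-s^2}\,u,s)$ with $u\in\Sm^{d-1}$, a second-order Taylor expansion gives
\[
\varphi(p)-\varphi(k)=(s-1)\partial_{x_{d+1}}\varphi(k)+\sqrt{1-s^2}\,u\cdot\nabla_d\varphi(k)+\calO(1-s).
\]
The linear term in $u$ vanishes after integration over $\Sm^{d-1}$, leaving a pointwise bound
\[
|\bar\calL_\eps\varphi(k)|\leq C\int_{-1}^1 F^\eps(s)(1-s)(1-s^2)^{(d-2)/2}\,ds.
\]
Plugging in $F^\eps(s)=\eps^{\beta+d/2}K(\eps(1-s))$, changing variable $t=\eps(1-s)$ and splitting the integral into the small-$t$ regime (where $K(t)\sim a_1/|t|^{\beta+d/2}$) and the bounded regime (where the integrability assumption on $(1-s^2)^{(d-2)/2}K(1-s)$ yields a uniformly controlled contribution), one recovers exactly the same $(1-s)^{-\beta}$ type singularity as in the non-rescaled case, and hence a bound independent of $\eps$ since $\beta\in(0,1)$.

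Next I would treat the quadratic variation via the carré du champ identity
\[
\bar\calL_\eps(\varphi^2)(k)-2\varphi(k)\bar\calL_\eps\varphi(k)=\int_{\Sm^d}F^\eps(k\cdot p)(\varphi(p)-\varphi(k))^2\,d\sigma(p).
\]
For $\varphi\in\calC^\infty(\Sm^d)$, $(\varphi(p)-\varphi(k))^2\leq C|p-k|^2\leq 2C(1-k\cdot p)$, so the same estimate as above produces a uniform bound $\|\bar\calL_\eps(\varphi^2)-2\varphi\bar\calL_\eps\varphi\|_{L^\infty(\Sm^d)}\leq \tilde C(\varphi)$, whence $\E_k[\langle M_\eps^\varphi\rangle_T]\leq \tilde C(\varphi)\,T$ uniformly in $\eps$.

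With these uniform bounds in hand, Aldous's criterion on $\varphi(m^\eps)$ is immediate: for stopping times $\tau\leq\tau'\leq\tau+\delta\leq T$,
\[
\E_k\big[|\varphi(m^\eps(\tau'))-\varphi(m^\eps(\tau))|^2\big]\leq 2\,\E_k\big[(M_\eps^\varphi(\tau')-M_\eps^\varphi(\tau))^2\big]+2\,C(\varphi)^2\delta^2\leq 2\tilde C(\varphi)\delta+2C(\varphi)^2\delta^2,
\]
which tends to $0$ as $\delta\to 0$, uniformly in $\eps$ and in $\tau$. Combined with compactness of $\Sm^d$, this yields tightness of $(\varphi(m^\eps))_\eps$ in $\calD([0,T],\Rm)$ for every $T>0$ and every $\varphi\in\calC^\infty(\Sm^d)$, and therefore tightness of $(\mathbb{M}^\eps_k)_\eps$ in $\calD([0,\infty),\Sm^d)$. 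The only delicate point throughout is the uniform control of the singular, rescaled collision integral sketched above; everything else is a standard martingale/tightness routine.
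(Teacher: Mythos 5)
Your proposal is correct and follows essentially the same route as the paper: the paper likewise verifies Aldous's criterion via the Dynkin martingale decomposition under $\mathbb{M}^\eps_k$, with the drift and the quadratic variation \eqref{defquad} both controlled uniformly in $\eps$ by the same integrable $(1-s)^{-\beta}$ bound on the rescaled kernel that you derive. The only packaging difference is that the paper applies Aldous directly to the coordinate processes $y^j$ of the sphere embedded in $\Rm^{d+1}$, so that only the $d+1$ linear test functions $\varphi(p)=p_j$ are needed (as in Lemma \ref{coefD}), whereas you route through the Ethier--Kurtz/Jakubowski dense-family criterion with all $\varphi\in\calC^\infty(\Sm^d)$; your uniform bound on $\bar\calL_\eps\varphi$ is then exactly the computation the paper carries out separately in Lemma \ref{cvgene}.
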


\begin{proof}
We will prove this lemma by using Aldous's tightness criterion \cite[Theorem 16.10 pp. 178]{billingsley1}. The first requirement is 
\[\lim_{M\to +\infty}\sup_{\eps \in (0,1)}\mathbb{M}^\eps_{k}\Big(\sup_{0\leq t\leq T} \|y_t \|_{\Rm^{d+1}} \geq M\Big)=0\qquad \forall T>0,\]
which is direct since $\mathbb{M}^\eps_{k}$ is supported on $\Sm^d$. Now, let $T>0$ and $\tau$ be a discrete stopping time relatively to the canonical filtration, and bounded by $T$. Considering also two real numbers $\mu >0$ and $\nu >0$, we have
\[\mathbb{M}^\eps_{k}(\| y_{\tau +\mu}-y_\tau\|_{\Rm^{d+1}} >\nu)\leq \sum_{j=1}^{d+1} \mathbb{M}^\eps_{k}( \vert y^j_{\tau +\mu}-y^j_{\tau} \vert >\nu/(d+1) ), \]
where $y^j$ stands for the $j$-th component of $y$. In order to study the increment, let us us introduce 
\bee M_j(t)&=&y^j_t-\left(\int_0^t du \bar \calL_\eps (p-y_u) \right)_j \\
&=&y^j_t-\int_0^t du \textrm{ p.v.} \int_{\Sm^d} d\sigma(p) F^\eps(p\cdot y_u)(p_j-y^j_u) 
\eee
which is a martingale under $\mathbb{M}^\eps_{k}$ (see \cite[Proposition 1.7 pp. 162]{ethier}) with quadratic variation
\be\label{defquad}<M_j>(t)=\int_0^t (\bar\calL_\eps \varphi^2 (y^j_u) -2\varphi(y^j_u) \bar\calL_\eps\varphi(y^j_u))du,\ee
where $\varphi(x)=x$. Hence,
\[\begin{split}
\mathbb{M}^\eps_{k}( \vert y^j_{\tau +\mu}&-y^j_\tau \vert >\nu/(d+1) )\\
& \leq \mathbb{M}^\eps_{k}( \vert M_j(\tau +\mu)-M_j(\tau) \vert >\nu/(2(d+1)))\\
&+\mathbb{M}^\eps_{k}\Big(\Big\vert\int^{\tau +\mu}_{\tau} du \textrm{ p.v.}\int_{\Sm^d} d\sigma(p) F^\eps(p\cdot y_u)(p_j-y^j_u)  \Big\vert>\nu/(2(d+1))\Big)\\
&\leq I+II.
\end{split}\]
Regarding the second term, proceeding as in the proof of Lemma \ref{coefD}, we find
\be\label{calc1}\begin{split}
\Big\vert \int^{\tau +\mu}_{\tau} du \textrm{ p.v.} \int_{\Sm^d} d\sigma(p) F^\eps(p\cdot y_u)(p_j-y^j_u))\Big\vert \leq \mathfrak{C}\int^{\tau +\mu}_{\tau} du |y^j_u| \leq  \mu \mathfrak{C},
\end{split}\ee
for all $\mu>0$, and therefore $II=0$. Now, regarding the term $I$, we have using the martingale property of $M_j$
\[\begin{split}
 \mathbb{M}^\eps_{k}( \vert M_j(\tau +\mu)&-M_j(\tau) \vert^2 >\nu^2/(4(d+1)^2))\\
 &\leq \frac{4(d+1)^2}{\nu^2} \E^{ \mathbb{M}^\eps_{k}}[ \vert M_j(\tau +\mu)-M_j(\tau) \vert^2]\\
 &\leq \frac{4(d+1)^2}{\nu^2} \E^{ \mathbb{M}^\eps_{k}}[ <M_j>(\tau +\mu)-<M_j>(\tau) ]\\
 &\leq   C_{2,\nu} \mu,
 \end{split}\]
where we used \eqref{defquad} and similar arguments as in the derivation of \eqref{calc1}. As a result, we obtain
\[ \lim_{\mu\to 0}\sup_{\eps}\sup_{\tau} \mathbb{M}^\eps_{k}(\| y_{\tau +\mu}-y_\tau\|_{\Rm^{d+1}} >\nu)=0,\]
which concludes the proof of Lemma \ref{relatcomp}.
\end{proof}

We will use the following lemma that shows the convergence of the generators.  
\begin{lemma}\label{cvgene}
We have for all $\varphi \in \mathcal{C}^\infty(\Sm^{d})$, 
\[\lim_{\eps \to 0}\|\mathcal{L}_{\eps}\varphi-\mathcal{L}_\beta \varphi\|_{L^\infty(\Sm^{d})}=0.\]
\end{lemma}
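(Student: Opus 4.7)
The plan is to reduce the convergence to a one-dimensional comparison in the variable $s=k\cdot p$ and to treat the singular zone near $s=1$ separately from the regular one. I first extend $\varphi \in \calC^\infty(\Sm^d)$ to a $\calC^\infty$ function on $\Rm^{d+1}$; since both kernels depend only on $k\cdot p$, rotational invariance on $\Sm^d$ lets me fix $k=e_{d+1}$. Parametrizing $p = (\sqrt{1-s^2}\,u,s)$ with $s\in[-1,1]$ and $u\in\Sm^{d-1}$, so that $d\sigma(p) = (1-s^2)^{(d-2)/2}\,ds\,d\sigma(u)$, I set
\[
\Phi(s) := \int_{\Sm^{d-1}} \bigl(\varphi(\sqrt{1-s^2}\,u+sk)-\varphi(k)\bigr)\,d\sigma(u).
\]
A second-order Taylor expansion of $\varphi$ at $k$, combined with $\int_{\Sm^{d-1}}u\,d\sigma(u)=0$ (which kills the first-order component in $u$) and $|p-k|^2 = 2(1-s)$, yields $|\Phi(s)|\leq C(1-s)$ with $C$ depending only on $\|\varphi\|_{\calC^2}$, hence uniform in $k$. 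Both $\calL_\eps\varphi(k)$ and $\calL_\beta\varphi(k)$ then recast as absolutely convergent one-dimensional integrals, and
\[
\calL_\eps\varphi(k)-\calL_\beta\varphi(k) = \int_{-1}^{1}\bigl(F^\eps(s)-G(s)\bigr)\,\Phi(s)\,(1-s^2)^{(d-2)/2}\,ds,
\]
where $G(s)$ is the one-dimensional kernel representing $\calL_\beta$, proportional to $(1-s)^{-\beta-d/2}$ via $|k-p|^{2\beta+d} = (2(1-s))^{\beta+d/2}$.

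Next I split this integral at $s=1-\delta$ with $\delta>0$ chosen small. On the near zone $[1-\delta,1]$, the asymptotic $K(t)\sim a_1/|t|^{\beta+d/2}$ from \fref{equivpeak} supplies $t_0>0$ and $C>0$ with $K(t)\leq Ct^{-\beta-d/2}$ for $t\in(0,t_0]$; choosing $\eps$ small enough that $2\eps\leq t_0$ gives $F^\eps(s)\leq C(1-s)^{-\beta-d/2}$ uniformly on $[1-\delta,1]$. Combined with $|\Phi(s)|(1-s^2)^{(d-2)/2}\leq C(1-s)^{d/2}$, the integrand of the near part is dominated by $C(1-s)^{-\beta}$, which is integrable since $\beta\in(0,1)$; the near contribution is therefore $O(\delta^{1-\beta})$, uniformly in small $\eps$ and in $k$.

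On the far zone $[-1,1-\delta]$, I use the identity $F^\eps(s)(1-s)^{\beta+d/2} = (\eps(1-s))^{\beta+d/2}K(\eps(1-s))$. Since $\eps(1-s)\leq 2\eps\to 0$ uniformly on $[-1,1-\delta]$, the asymptotic $t^{\beta+d/2}K(t)\to a_1$ as $t\to 0^+$ gives $F^\eps(s)(1-s)^{\beta+d/2}\to a_1$ uniformly there, and hence $F^\eps-G\to 0$ uniformly on $[-1,1-\delta]$. Paired with the $\delta$-dependent integral $\int_{-1}^{1-\delta}|\Phi(s)|(1-s^2)^{(d-2)/2}(1-s)^{-\beta-d/2}\,ds$, which is finite for each $\delta>0$ because $|\Phi(s)|\leq C(1-s)$ and $\beta\in(0,1)$, this shows the far contribution tends to $0$ as $\eps\to 0$ for each fixed $\delta$. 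Taking $\delta$ small first and then $\eps$ small yields $\|\calL_\eps\varphi-\calL_\beta\varphi\|_{L^\infty(\Sm^d)}\to 0$; uniformity in $k$ comes from the $k$-independence of all constants involved. The main technical obstacle is the uniform pointwise control of $F^\eps$ near the singularity, and this is exactly what the strict asymptotic form of $K$ in \fref{equivpeak} furnishes.
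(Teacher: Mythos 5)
Your argument is correct and rests on the same two pillars as the paper's proof: the rotation-invariance reduction to $k=e_{d+1}$ with the one-dimensional variable $s=k\cdot p$, and the second-order Taylor bound $|\Phi(s)|\le C(1-s)$ obtained because the $u$-average kills the gradient term. Where you differ is in how the kernel comparison is organized. The paper makes no near/far splitting: writing $t=\eps(1-s)$, one has $F^\eps(s)-a_1(1-s)^{-\beta-d/2}=(1-s)^{-\beta-d/2}\bigl(t^{\beta+d/2}K(t)-a_1\bigr)$, and since $t\le 2\eps$ for \emph{every} $s\in(-1,1)$, the bracket is uniformly small over the whole interval once $\eps$ is small --- this is estimate \fref{estk} --- so a single integration against $|\Phi(s)|(1-s^2)^{(d-2)/2}\le C(1-s)^{d/2}$ bounds the difference by $C\mu$ in one stroke. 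Your split at $s=1-\delta$ is therefore redundant: the observation you invoke in the far zone ($\eps(1-s)\le 2\eps$ uniformly) is valid up to $s=1$, and your near-zone domination $F^\eps(s)\le C(1-s)^{-\beta-d/2}$ is subsumed by it; the split would only be needed under a weaker hypothesis where the asymptotics of $K$ controlled just a neighborhood of $0$ while $t$ could remain of order one, which cannot happen here. A modest virtue of your packaging is that it makes explicit the uniformity in $s$ that \fref{estk} states pointwise but uses uniformly. One constant deserves attention: you define $G$ through $|k-p|^{2\beta+d}=(2(1-s))^{\beta+d/2}$, i.e.\ $G(s)=a_1 2^{-\beta-d/2}(1-s)^{-\beta-d/2}$, yet your far-zone step concludes $F^\eps-G\to0$ from $F^\eps(s)(1-s)^{\beta+d/2}\to a_1$; these differ by the factor $2^{\beta+d/2}$. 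With the normalization \fref{equivpeak}, the correct limit kernel in the $s$ variable is $a_1(1-s)^{-\beta-d/2}$, which is what the paper's proof compares against, and which matches the operator $\calL_\beta$ as stated in Theorem \ref{th3} only up to this same factor; since that discrepancy already exists between the statement of Theorem \ref{th3} and the paper's own proof of Lemma \ref{cvgene}, it is not a flaw in your reasoning, but you should set $G(s)=a_1(1-s)^{-\beta-d/2}$ to keep your two claims internally consistent.
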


\begin{proof} Let $\delta F^\eps (s):= \eps^{\beta+d/2}K(\eps(1-s))-a_1\vert 1-s\vert ^{-\beta-d/2}$. According to \eqref{equivpeak}, for all $s\in(-1,1)$ and for all $\mu>0$, there exists $\eps_0>0$ such that, $\forall \eps \in (0,\eps_0)$, 
\be \label{estk} 
|\delta F^\eps (s)|\leq \mu \vert 1-s\vert ^{-\beta-d/2}.\ee
We then write
$$
 \delta \calL_\eps \varphi:=\calL_\eps \varphi-\mathcal{L}_{\beta}\varphi=\textrm{ p.v.} \int_{\Sm^d} d\sigma(p) \delta F^\eps(p\cdot k)(\varphi(p)-\varphi(k)). 
$$
We may assume without loss of generality that $k=e_{d+1}=(0,\dots,0,1)$, and write
\[
p = (\sqrt{1-s^2}u,s),
\]
with $s\in[-1,1]$, and $ u\in \Sm^{d-1}$. Then,
$$
\delta \calL_\eps \varphi(k)=\lim_{\eta \to 0} \int_{\Sm^{d-1}} \int_{-1}^{1-\eta} \delta F_\eps(s) (\varphi(\sqrt{1-s^2}u+s k)-\varphi(k)) (1-s^2)^\frac{d-2}{2}d\sigma(u) ds.
$$
 Recasting $\varphi$ as
$$
\varphi(k)=\phi(0,\cdots,0,1), \qquad \varphi(\sqrt{1-s^2}u+s k)=\phi(\sqrt{1-s^2}u_1, \cdots,\sqrt{1-s^2}u_{d},s),
$$
we have
\bee
\varphi(\sqrt{1-s^2}u+s k)-\varphi(k)&=&(s-1) \partial_{x_{d+1}} \phi(0,\cdots,0,1)+ \sqrt{1-s^2} u \cdot \nabla_{d}\phi(0,\cdots,0,1)\\
&&+\calO(|s-1|),
\eee
where $\partial_{x_{d+1}}$ denotes partial derivation with respect to the $(d+1)-$th variable and $\nabla_{d}$ the gradient with respect to the $d$ first variables. Since $\int_{\Sm^{d-1}} u d\sigma(u)=0$, it follows from the equation above and \fref{estk} that
\be
|\delta \calL_\eps f(k)| \leq  C\mu \int_{-1}^{1} \frac{(1-s)(1-s^2)^\frac{d-2}{2}}{(1-s)^{\beta+\frac{d}{2}}} ds \leq C\mu \int_{-1}^{1} \frac{ds}{(1-s)^\beta} \leq C \mu,
\ee
since $\beta \in (0,1)$. This concludes the proof of the lemma.
\end{proof}

We have all needed now to conclude the proof. We deduce from Lemma \ref{relatcomp} that, up to the extraction of a subsequence, $\mathbb{M}^\eps_{k}$ converges weakly to a measure $\mathbb{M}_{k}$.  As a consequence of Lemma \ref{cvgene}, $\mathbb{M}_{k}$ is a solution to the martingale problem associated to $\bar{\calL}_\beta$ and starting at $k$. Since the equation $\partial_t u= \bar \calL_\beta u$ admits a unique solution for a given initial condition, it turns out that this martingale problem is well-posed (c.f. \cite[Theorem 4.2 pp. 184]{ethier}), so that the entire sequence converges since the limiting measure is unique.

When $f_0 \in \calC_b^0(\Rm^{d+1} \times \Sm^d) \cap L^2(\Rm^{d+1} \times \Sm^d)$, then, pointwise in $(t,x,k)$,
$$
f_\eps(t,x,k)=  \E^{\mathbb{M}^\eps_k\circ\Theta^{-1}_x}[f^0(y_t)] \to  \E^{\mathbb{X}_{x,k}}[f^0(y_t)],
$$
where $\mathbb{X}_{x,k}=\mathbb{M}_k\circ\Theta^{-1}_x$ is the law of a diffusion process with generator $-k\cdot\nabla_x+\bar{\calL}_\beta$ and starting at $(x,k)$. This, together with Lemma \ref{cvgene} and dominated convergence, allows us to pass to the limit in the weak formulation of \fref{peak}, and to deduce that the limit above is a  weak solution to \fref{FP}. According to Theorem \ref{th1}, the solution to \fref{FP} is unique and actually $\calC^\infty$.

When $f_0$ is only in $L^2(\Rm^{d+1} \times \Sm^d)$, we use a regularization procedure similar to the one at the end of the proof of Theorem \ref{th2}. This concludes the proof of Theorem \ref{th3}.

 \end{document}